\newtheorem{thm}{Theorem}[section]
\newtheorem{prop}[thm]{Proposition}
\newtheorem{conj}[thm]{Conjecture}
\newtheorem{heuristic}[thm]{Heuristic}
\theoremstyle{definition}
\newtheorem{defn}[thm]{Definition}
\newtheorem{rem}[thm]{Remark}
\newcommand{\G}{\mathbf{G}}
\newcommand{\LL}{\mathbf{L}}
\newcommand{\tr}{\mathrm{tr}}
\newcommand{\Hom}{\mathrm{Hom}}
\newcommand{\End}{\mathrm{End}}
\newcommand{\R}{\mathbb{R}}
\newcommand{\GL}{\mathbf{GL}}
\newcommand{\frob}{\mathrm{Frob}}
\newcommand{\Z}{\mathbb{Z}}
\newcommand{\Q}{\mathbb{Q}}
\newcommand{\FF}{\mathbb{F}}
\begin{document}

\title[$\GL_2(\FF_p)$-number fields and elliptic curves of bounded conductor]{On Bhargava's heuristics for $\GL_2(\FF_p)$-number fields and the number of elliptic curves of bounded conductor}
\author{Michael Lipnowski}
\maketitle

\setcounter{section}{0}

\section{Introduction}

The following problems are currently unresolved:

\begin{itemize}
\item[\textbf{Problem ($\Delta$)}]
Prove that the number of elliptic curves over $\mathbb{Q}$ with minimal discriminant of absolute value at most $X$ is $o(X).$ \medskip  

\item[\textbf{Problem ($N$)}]
Prove that the number of elliptic curves over $\mathbb{Q}$ with conductor at most $X$ is at most $o(X).$ \medskip  
\end{itemize}

Let $E_{\mathrm{min}}: y^2 + a_1 xy + a_3 y = x^3 + a_2 x^2 + a_4 x + a_6, a_i \in \mathbb{Z}, a_1,a_3 \in \{ 0,1\}, |a_2| \leq 1$ 
denote the unique integral minimal model for $E.$  Problem ($\Delta$) amounts to counting the number of lattice points $(a_4,a_6)$ in the region \cite{BM} \cite{Watkins}
$$R_X := |64a_4^3 + 432 a_6^2| \leq X.$$  

The area of $R_X$ in the $(a_4,a_6)$-plane has order of magnitude $X^{5/6}$ (with perimeter of smaller order of magnitude), which led Brumer-McGuinness to conjecture that $X^{5/6}$ is the correct order of magnitude in Problem ($\Delta$) \cite{BM}.  
Work of Davenport \cite{Davenport} bounding the number of $\GL_2(\Z)$-orbits of binary cubic forms of $|\mathrm{disc}| < X$ and uniform bounds due to Delone \cite{Delone} and Everste \cite{Evertse} for the number of integral solutions to Thue equations implies that the number of lattice points in $R_X$ is $O(X),$ the best progress toward Problem ($\Delta$) currently known.

Convincing heuristics of Watkins yield an asymptotic guess $\sim c \cdot X^{5/6},$ for an explicit constant $c,$ in Problem ($N$) \cite{Watkins}.  Ultimately, the order of magnitude $X^{5/6}$ again arises from a conjectural asymptotic for the number of lattice points in the region $R_X.$  Duke-Kowalski prove an upper bound of $O_{\epsilon}(X^{1 + \epsilon})$ in Problem ($N$) \cite[\S 3.1]{DK}.

Problems ($\Delta$) and ($N$) have resisted direct geometry-of-numbers approaches because the region $R_X$ is very cuspy, thus making estimating the number of lattice points therein very difficult.  \medskip

%\footnote{Discriminant vs. Conductor: Squarefree discriminants give a positive proportion of all elliptic curve discriminants (of course, I can't prove this).  In such cases, the discriminant equals the conductor.  Later mass formula heuristics will be \emph{much easier} if we assume the conductor is squarefree.}

One purpose of this note is to explain how Bhargava's local-to-global heuristics for counting number fields shed light on Problem ($N$).  The basic idea can be decomposed into two steps:
\begin{itemize}
\item[Step 1]
Modulo widely believed Diophantine Conjectures\footnote{These conjectures are of a different nature than conjectures pertaining to counting lattice points in the cuspy region $R_X.$} described in \S \ref{diophantineconjectures}, essentially no information is lost in passing from an elliptic curve $E$ to the Galois representation $r_{E,p}: G_{\Q} := \mathrm{Gal}(\overline{\Q} / \Q) \rightarrow \GL_2(\FF_p)$ on its $p$-torsion points (for $p$ sufficiently large, independent of $E$).  

\item[Step 2]
The homomorphisms $r_{E,p}: G_{\Q} \rightarrow \GL_2(\FF_p)$ have severely constrained local properties; these are the local properties enjoyed by every $r_{f,p}$ where $r_{f,p}: G_\Q \rightarrow \GL_2(\FF_p)$ is the Galois representation associated with the $\FF_p$-valued Hecke eigenvalue system $f.$  And indeed, thanks to Khare-Winterberg's proof of Serre's Conjecture, the representations $r_{f,p}$ are the universal source of homomorphisms $r: G_\Q \rightarrow \GL_2(\FF_p)$ which ``look like" the representations $r_{E,p}$ at places of bad reduction and at $p.$  Call representations $r: G_\Q \rightarrow \GL_2(\FF_p)$ satisfying the same local properties as every $r_{f,p}$ \emph{locally modular}. 

This being said, $r_{E,p}$ looks \emph{unlike} a typical $r_{f,p}$ at places of \emph{good} reduction which are small in the archimedean sense.  For example, if $E$ has good reduction at 2, say, the Hasse bound implies that $\tr(r_{E,p}(\frob_2))$ is the mod $p$ reduction of an integer of absolute value at most $2 \sqrt{2}$; this is not a property enjoyed by typical homomorphisms $r: G_\Q \rightarrow \GL_2(\FF_p).$  Bhargava's local-to-global heuristics for counting number fields, recalled in Sections \ref{redux}, \ref{take1} and \ref{take2}, can be used to prove upper bounds on the number of conjugacy classes of homomorphisms $G_{\Q} \rightarrow \GL_2(\FF_p)$ which are locally modular.  Accounting, in addition, for the Hasse bound gives a considerable saving.
\end{itemize}   

Putting together Steps 1 and 2:

\begin{prop} \label{assumingbhargavaheuristic}
Assume Bhargava's local-to-global Heuristics \ref{GL2bhargavaheuristictake1}, \ref{GL2bhargavaheuristictake2} for a subset $U$ of the odd, squarefree integers.  Then 
\begin{equation*}
\limsup_{X \to \infty} \frac{\sum_{N \in U \cap [X,2X]} \# \{ r: G_\Q \rightarrow \GL_2(\FF_p): \mathrm{cond}(r) = N, \text{locally modular}, \text{Hasse bound at 2} \} }{\# U \cap [X, 2X]} \leq \frac{C}{p}
\end{equation*}
for some constant $C.$  If we further assume the Diophantine Conjectures from \S \ref{diophantineconjectures}, then
\begin{equation*}
\limsup_{X \to \infty} \frac{\sum_{N \in U \cap [X,2X]} \# \text{elliptic curves of conductor N} }{\# U \cap [X, 2X]} = 0.  
\end{equation*}
\end{prop}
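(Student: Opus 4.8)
The plan is to assemble the Proposition from Steps~1 and~2 above, granting the heuristics of \S\ref{take1}, \S\ref{take2} and the Diophantine Conjectures of \S\ref{diophantineconjectures}. The first displayed inequality will come from feeding one extra local constraint — the Hasse bound at $2$ — into the heuristics; the second equation will follow by combining the first with Step~1 and then letting $p\to\infty$, using that the left-hand side of the second equation is independent of $p$ while the bound in the first decays in $p$.

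For the first inequality, fix a large prime $p$. I would first record how constrained a locally modular $r$ of odd squarefree conductor $N$ is: it has determinant the cyclotomic character and is odd at $\infty$; at each $\ell\mid N$ it exhibits the (unramified twist of the) Steinberg behavior forced on every $r_{f,p}$ at a prime exactly dividing the level — inertia surjects onto a transvection and Frobenius is determined up to an unramified twist — and at $p$ and the remaining places it satisfies the further local conditions built into local modularity (finite-flatness at $p$, which when $p\nmid N$ just amounts to being unramified there). Heuristics~\ref{GL2bhargavaheuristictake1} and~\ref{GL2bhargavaheuristictake2} then predict that $\sum_{N\in U\cap[X,2X]}\#\{r:\mathrm{cond}(r)=N,\ \text{locally modular}\}$ is $O\big(\#(U\cap[X,2X])\big)$ with implied constant independent of $p$, the bad-prime local masses ($\asymp 1/\ell$) together with the $O(1)$ local masses at $p$ and $\infty$ assembling into a convergent Euler product. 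Now impose the Hasse bound at $2$: since $\det r(\frob_2)=2$ is already fixed, requiring $\tr r(\frob_2)$ to be the reduction mod $p$ of an integer in $[-2\sqrt{2},\,2\sqrt{2}]$ restricts $\frob_2$ to $O(1)$ of the roughly $p$ conjugacy classes of $\GL_2(\FF_p)$, shrinking the local mass at $2$ by a factor $O(1/p)$. Feeding this back through the heuristic yields
\begin{equation*}
\sum_{N\in U\cap[X,2X]}\#\{r:\mathrm{cond}(r)=N,\ \text{locally modular},\ \text{Hasse at }2\}\ \le\ \frac{C}{p}\,\#(U\cap[X,2X])+o\big(\#(U\cap[X,2X])\big)
\end{equation*}
with $C$ absolute, and the first displayed bound follows on dividing and taking $\limsup_{X\to\infty}$.

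For the second equation I would invoke Step~1. Granting the Diophantine Conjectures, there is an absolute $p_0$ so that for $p\ge p_0$ the passage $E\mapsto r_{E,p}$ is essentially lossless: the fibers over a fixed $r$ are of absolutely bounded size (uniform Frey--Mazur) and $\mathrm{cond}(E)$ is recovered from $\mathrm{cond}(r_{E,p})$ up to controlled, rare adjustments (Szpiro-type control of level-lowering and of the $p$-part of the conductor). For an $E$ of odd squarefree conductor $N$, the representation $r_{E,p}$ is locally modular — it is modular, hence $=r_{f,p}$ for some $f$, so it meets exactly the defining local conditions — and it satisfies the Hasse bound at $2$ because $N$ odd forces good reduction there. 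Hence, with $B$ absolute and the conductors $N'$ ranging over the bounded set of values permitted by the previous sentence,
\begin{equation*}
\sum_{N\in U\cap[X,2X]}\#\{\text{elliptic curves of conductor }N\}\ \le\ B\sum_{N'}\#\{r:\mathrm{cond}(r)=N',\ \text{locally modular},\ \text{Hasse at }2\},
\end{equation*}
and applying the first part (which holds verbatim for each such family of conductors) bounds the $\limsup_{X\to\infty}$ of the left-hand ratio by $C'/p$ for an absolute $C'$. As the left-hand side is independent of $p$ and $p\ge p_0$ is arbitrary, that $\limsup$ must be $0$.

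The genuine obstacle is Step~1, not the counting on top of it: making $E\mapsto r_{E,p}$ essentially lossless \emph{uniformly} in $E$ and $p$ — bounding the fibers and, above all, handling level-lowering and the $p$-part of $\mathrm{cond}(r_{E,p})$ so that they do not spoil the bookkeeping with the conductor families $N'$ — is precisely the work the Diophantine Conjectures of \S\ref{diophantineconjectures} are designed to do. Once those and the heuristics are granted, the $1/p$ saving is soft: it is only the observation that, with its determinant fixed by the cyclotomic character, a Hasse-admissible $\frob_2$ lies in just $O(1)$ of about $p$ conjugacy classes of $\GL_2(\FF_p)$.
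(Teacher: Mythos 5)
Your overall structure matches the paper's argument exactly: compute the probability that a ``random'' locally modular $r$ satisfies the Hasse bound at $2$ (getting $O(1/p)$), feed this into Heuristic~\ref{GL2bhargavaheuristictake2} to obtain the first displayed bound, then combine with Step~1 (the Diophantine conjectures) and send $p\to\infty$ for the second display, observing as the paper does that the left-hand side of the second display is $p$-independent while the bound from the first decays like $1/p$.

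That said, there is a genuine error in your description of the local mass bookkeeping, and it happens to be invisible only because you assert the correct conclusion rather than follow your own claim through. You write that the bad-prime local masses are $\asymp 1/\ell$, so that a ``convergent Euler product'' assembles. If that were the case, $m(S)$ would decay like $1/N(S)$ and $\sum_{N\in U\cap[X,2X]} m(S)$ would be $O(1)$, not $O\bigl(\#(U\cap[X,2X])\bigr)$ — contradicting the bound you state immediately afterwards. In fact the paper's computation in \S\ref{mass}\,(b$'$) shows that the mass of Steinberg homomorphisms at each $\ell\mid N$ is \emph{exactly} $1$, independent of $\ell$, and hence $m(S)$ depends only on $p$ (through the archimedean and $p$-adic factors), not on $S$. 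It is precisely this $S$-independence that makes the heuristic count of locally modular $r$ per conductor bounded on average. Relatedly, your gloss that finite-flatness at $p$ ``amounts to being unramified'' is wrong: condition (d$'$) at $p$ is the ordinary/supersingular dichotomy, both of which are ramified (the cyclotomic character appears). Neither slip changes the final $O(1/p)$, but you should correct them.

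One thing you add that the paper does not explicitly use: the determinant of $r(\frob_2)$ is forced to be $2$ (the cyclotomic character at Frobenius), so the Hasse constraint really confines $\frob_2$ to $O(1)$ of the $\approx p$ conjugacy classes with that determinant. The paper instead computes the fraction of all of $\GL_2(\FF_p)$ with Hasse-admissible trace, getting $\frac{5}{p}+O(p^{-2})$. Both give $O(1/p)$. Your version is a legitimate sharpening, but does not change the order of magnitude. Finally, your remark about needing uniformity in Step~1 and controlling level-lowering/the $p$-part of the conductor is fair; the paper's \S\ref{diophantineconjectures} treats this somewhat informally (via the ``Frey--Mazur on average'' remark), and you correctly flag it as the place where the real Diophantine content lives.
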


Short of understanding the convergence rate in Bhargava's heuristics, the above strategy hinges on the fact that the number of locally modular homomorphisms $r: G_\Q \rightarrow \GL_2(\FF_p)$ of fixed squarefree Artin conductor $N$ is bounded on average.  This would follow from the local-to-global Heuristic \ref{GL2bhargavaheuristictake1}.  However, data suggests that there are \emph{too many locally modular homomorphisms} $r: G_\Q \rightarrow \GL_2(\FF_p)$; the number of locally modular $r$ of squarefree conductor $N$ appears to grow quickly with the number of prime factors of $N.$  \medskip

The second purpose of this note is to propose a new model for counting locally modular homomorphisms $r: G_\Q \rightarrow \GL_2(\FF_p)$ of squarefree conductor $N < X.$  Serre's Conjecture tells us that $r$ is associated with an $\FF_p$-valued eigensystem occurring in $S_2(N, \FF_p)_{\mathrm{new}}.$  Suppose $N$ is squarefree with $\omega(N)$ distinct prime factors.  The space $S_2(N, \FF_p)_{\mathrm{new}}$ admits an action by a finite abelian group of Atkin-Lehner involutions, isomorphic to $(\Z / 2)^{\omega(N)},$ which commute with all other (prime to $N$) Hecke operators.  The model proposes that all prime to $N$ Hecke operators, in their action on each of the $2^{\omega(N)}$ simultaneous $\pm 1$ eigenspaces of these Atkin-Lehner involutions, have characteristic polynomials behaving like $2^{\omega(N)}$ independent uniformly sampled (large degree) polynomials over $\FF_p.$  Hecke eigenvalue systems valued in $\FF_p$ correspond to linear factors of these characteristic polynomials.  Since a typical large degree polynomial over $\FF_p$ has one $\FF_p$-rational root, we might guess that there is one locally modular representation per simultaneous Atkin-Lehner eigenspace.  This leads us to the following Conjecture:

\begin{conj} \label{randomlargepolynomial}
Let $U$ be a ``large" subset of the squarefree integers.  Let  
$$G_U(X) :=  \sum_{N \in U \cap [X,2X]} \# \{\text{locally modular } r: G_\Q \rightarrow \GL_2(\FF_p) \text{ of conductor } N \}.$$

There are constants $C_1,C_2 > 0$ for which  

$$C_1 \sum_{N \in U \cap [X,2X]} 2^{\omega(N)} \leq G_U(X) \leq C_2 \sum_{N \in U \cap [X,2X]} 2^{\omega(N)}.$$

In particular, letting $G(X) = G_{\text{all squarefree integers}}(X),$ we conjecture that 

\begin{equation*}
C_1 X \log X \leq G(X) \leq C_2 X \log X.
\end{equation*}
\end{conj}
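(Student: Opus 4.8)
The genuinely conjectural content here is the two–sided bound $C_1\sum 2^{\omega(N)}\le G_U(X)\le C_2\sum 2^{\omega(N)}$; the ``in particular'' clause is a \emph{formal} consequence of that bound, specialized to $U$ the set of all squarefree integers, together with a classical mean–value estimate. So the plan is: first record the estimate
\begin{equation*}
\sum_{\substack{X\le N\le 2X\\ N\text{ squarefree}}} 2^{\omega(N)}\;=\;\bigl(c_0+o(1)\bigr)\,X\log X,\qquad c_0:=\prod_p (1-p^{-1})^2(1+2p^{-1})>0,
\end{equation*}
and then substitute it into the main inequality.

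For the estimate, note that $2^{\omega(N)}\mathbf{1}_{N\text{ squarefree}}=\mu^2(N)d(N)$, so the associated Dirichlet series has the Euler product $\sum_N \mu^2(N)d(N)N^{-s}=\prod_p(1+2p^{-s})$. Since $(1+2p^{-s})(1-p^{-s})^2=1-3p^{-2s}+2p^{-3s}$, we may write $\prod_p(1+2p^{-s})=\zeta(s)^2 H(s)$ where $H(s)=\prod_p(1-3p^{-2s}+2p^{-3s})$ converges absolutely and is non-vanishing for $\Re s>1/2$; in particular the series has a double pole at $s=1$ with $H(1)=c_0$. A standard Tauberian theorem for Dirichlet series with a pole of order two (Delange, or Selberg--Delange) then gives $\sum_{N\le Y}\mu^2(N)d(N)=c_0\,Y\log Y+O(Y)$, and subtracting the bounds at $Y=2X$ and $Y=X$ yields the displayed dyadic estimate (the $X\log 4$ term being absorbed into the error). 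Feeding this into the main inequality with $U=\{\text{all squarefree integers}\}$ — so that $G(X)=G_U(X)$ and $U\cap[X,2X]$ is exactly the squarefree integers in $[X,2X]$ — gives $C_1(c_0+o(1))X\log X\le G(X)\le C_2(c_0+o(1))X\log X$, and absorbing $c_0$ and the $o(1)$ into fresh constants produces the asserted inequalities $C_1 X\log X\le G(X)\le C_2 X\log X$ (with constants differing from those in the main inequality by the implied constant in the mean value).

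The only hard part is, of course, the main inequality itself, which remains open. For the \textbf{upper} bound one would need a rigorous form of Bhargava's local–to–global heuristic controlling the number of conjugacy classes of locally modular $r\colon G_\Q\to\GL_2(\FF_p)$ of squarefree conductor $N$, the delicate point being the ``mass'' contributed by the ramified local conditions at the primes $p\,\|\,N$ (exactly analogous to the tame/wild contributions in the mass formula for \'etale algebras), together with the Hasse-bound saving at $2$; controlling these for $N$ with many prime factors — where the heuristic suggests the count grows like $2^{\omega(N)}$ rather than polynomially in $\log N$ — is precisely what is not currently within reach. For the \textbf{lower} bound one would need to show that, on average over squarefree $N\in[X,2X]$, the prime-to-$N$ Hecke characteristic polynomials on the $2^{\omega(N)}$ simultaneous Atkin--Lehner eigenspaces of $S_2(N,\FF_p)_{\mathrm{new}}$ genuinely contribute $\gg 2^{\omega(N)}$ distinct $\FF_p$-valued eigensystems — an equidistribution statement for mod-$p$ Hecke eigenvalues (a vertical Sato--Tate / mod-$p$ Eichler--Selberg trace-formula input, or a mod-$p$ large sieve) strong enough to rule out systematic collisions among the $\FF_p$-points of these polynomials. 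I expect this lower bound to be the principal obstacle: it amounts to proving, rather than merely positing, the independence heuristic on which the conjecture's main term rests.
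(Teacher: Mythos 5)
Your proposal is correct and mirrors the paper's treatment: the two-sided bound is the genuinely conjectural content (motivated by the Atkin--Lehner/random-permutation model), and the ``in particular'' clause is derived from the same Euler-product identity $\sum_{N\text{ squarefree}}2^{\omega(N)}N^{-s}=\prod_p(1+2p^{-s})=\zeta(s)^2\prod_p(1-3p^{-2s}+2p^{-3s})$ that the paper uses in Appendix B. The only cosmetic difference is that you invoke a named Tauberian theorem (Selberg--Delange) while the paper runs a hands-on Mellin inversion with a smooth cutoff; these are interchangeable.
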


The effect of the Hasse bound can also be understood from the perspective of this random polynomial model.  For $a_1,\cdots,a_n \in \FF_p$ distinct, those polynomials of degree $d$ vanishing to order at least $k_1,\cdots, k_n$ at $a_1,\ldots, a_n$ respectively lie in the kernels of $k = k_1 + \cdots + k_n$ linear functionals on the space of polynomials of degree $d$ which are linearly independent if $d \geq k.$  For uniformly sampled polynomials of degree $d,$ this simultaneous vanishing occurs with probability $p^{-k}.$  From this, we determine that the expected total order of vanishing at $a_1,\ldots,a_n$ approximately equals $\frac{n}{p} \cdot \frac{p}{p-1}.$  

So for example, if $h_1,\ldots, h_5$ are the mod $p$ reductions of the 5 integers of absolute value at most $2\sqrt{2}$ and the characteristic polynomials of $T_2$ acting on one simultaneous eigenspace $V \subset S_2(N,\FF_p)_{\mathrm{new}}$ of the Atkin-Lehner operators, we would expect the total order of vanishing of this polynomial at $h_1,\ldots, h_5$ to be roughly $\frac{5}{p}.$  Assuming, furthermore, that the characteristic polynomials of $T_2$ acting on all of the $2^{\omega(N)}$ simultaneous eigenspaces behave independently, we are led to the following Conjecture:

\begin{conj} \label{hasserandomlargepolynomial}
Let $U$ be a ``large" subset of the odd squarefree integers.  Let  
$$G'_U(X) :=  \sum_{N \in U \cap [X,2X]} \# \{\text{locally modular } r: G_\Q \rightarrow \GL_2(\FF_p) \text{ of conductor } N,  \tr \; r(\frob_2) \in \{h_1,\cdots, h_5 \} \}.$$

There are constants $C_1,C_2 > 0$ for which  

$$\frac{C_1}{p} \sum_{N \in U \cap [X,2X]} 2^{\omega(N)} \leq G'_U(X) \leq \frac{C_2}{p} \sum_{N \in U \cap [X,2X]} 2^{\omega(N)}.$$

In particular, if $U$ equals the set of all squarefree integers with exactly $s$ prime factors, then
$$\limsup_{X \to \infty} \frac{G'_U(X)}{\# U \cap [X,2X]} \leq \frac{C_2 \cdot 2^s}{p}.$$
\end{conj}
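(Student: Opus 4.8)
The plan is to note first that the final ``in particular'' assertion is a purely formal consequence of the upper bound displayed in Conjecture \ref{hasserandomlargepolynomial}, and then to indicate how that two-sided bound on $G'_U(X)$ is itself argued for within the random polynomial model.

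For the formal step: with $U$ the set of squarefree integers having exactly $s$ prime factors, one has $\omega(N) = s$ for every $N \in U$, hence $\sum_{N \in U \cap [X,2X]} 2^{\omega(N)} = 2^{s}\cdot \#(U \cap [X,2X])$. For fixed $s$ and $X$ large, $\#(U \cap [X,2X]) > 0$ --- by Landau's theorem it is in fact $\asymp_{s} X(\log\log X)^{s-1}/\log X$ --- so dividing the inequality $G'_U(X) \leq \tfrac{C_2}{p} \sum_{N \in U \cap [X,2X]} 2^{\omega(N)}$ by $\#(U \cap [X,2X])$ and taking $\limsup_{X\to\infty}$ gives $\limsup_{X\to\infty} G'_U(X)/\#(U\cap[X,2X]) \leq C_2 2^{s}/p$.

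The real content is therefore the two-sided bound on $G'_U(X)$, which I would approach as follows. By Serre's Conjecture (Khare--Wintenberger), locally modular $r : G_\Q \to \GL_2(\FF_p)$ of squarefree conductor $N$ correspond to $\FF_p$-valued Hecke eigensystems in $S_2(N,\FF_p)_{\mathrm{new}}$, and the extra condition $\tr\, r(\frob_2) \in \{h_1,\dots,h_5\}$ picks out those on which $T_2$ acts with eigenvalue in $\{h_1,\dots,h_5\}$. Decomposing $S_2(N,\FF_p)_{\mathrm{new}} = \bigoplus_{\varepsilon \in (\Z/2)^{\omega(N)}} V_\varepsilon$ into the simultaneous eigenspaces of the Atkin--Lehner involutions, which commute with the prime-to-$N$ Hecke operators, reduces $G'_U(X)$ to a sum, over $N \in U \cap [X,2X]$ and over the $2^{\omega(N)}$ spaces $V_\varepsilon$, of the total order of vanishing at $h_1,\dots,h_5$ of the characteristic polynomial of $T_2|_{V_\varepsilon}$. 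Modeling each such polynomial as uniformly random of its (large) degree $d \geq 5$, the computation in the paragraph preceding the Conjecture shows the expected total order of vanishing on each eigenspace is $\tfrac{5}{p}\cdot\tfrac{p}{p-1}$; summing, the expectation of $G'_U(X)$ is $\tfrac{5}{p-1}\sum_{N \in U \cap [X,2X]} 2^{\omega(N)} \asymp \tfrac{1}{p}\sum_{N} 2^{\omega(N)}$. The two-sided bound is then the statement that $G'_U(X)$ concentrates about this expectation, which for a ``large'' family $U$ one expects from treating the contributions of distinct $N$ (and distinct $\varepsilon$) as independent and running a second-moment argument.

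The step I expect to be the main obstacle is the last one: it is the conjecture, not a theorem. Asserting that the characteristic polynomial of $T_2$ on a fixed Atkin--Lehner eigenspace equidistributes like a uniform random polynomial over $\FF_p$ --- and, a fortiori, that the polynomials attached to different eigenspaces and different levels behave independently --- is well beyond current technology; even showing these polynomials have an $\FF_p$-rational root a positive proportion of the time, or are generically squarefree, seems out of reach. Accordingly I would present the final displayed $\limsup$ bound only conditionally on the model, supporting the model itself by checking that the Hasse bound contributes precisely the five residues $h_1,\dots,h_5$ and by comparing with tables of $S_2(N,\FF_p)_{\mathrm{new}}$ for small $N$ and moderate $p$.
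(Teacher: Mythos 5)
Your proposal matches the paper's reasoning. The ``in particular'' assertion is indeed a purely formal consequence of the claimed upper bound: with $U$ the squarefree integers having exactly $s$ prime factors, $\sum_{N \in U \cap [X,2X]} 2^{\omega(N)} = 2^s \cdot \#(U \cap [X,2X])$, so dividing and taking $\limsup$ gives the stated inequality. (Your invocation of Landau's theorem to guarantee $\#(U\cap[X,2X]) > 0$ is more than is needed, but harmless.) For the two-sided bound, you reproduce the paper's heuristic derivation faithfully: passing via Serre's Conjecture to $\FF_p$-valued Hecke eigensystems, decomposing $S_2(N,\FF_p)_{\mathrm{new}}$ into the $2^{\omega(N)}$ simultaneous Atkin--Lehner eigenspaces, modeling the characteristic polynomial of $T_2$ on each eigenspace as a uniformly random polynomial of large degree, computing the expected total order of vanishing at $h_1,\dots,h_5$ to be $\tfrac{5}{p}\cdot\tfrac{p}{p-1}$, and positing independence across eigenspaces and levels. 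This is exactly the paragraph of motivation immediately preceding Conjecture \ref{hasserandomlargepolynomial} in the text, together with the independence assumption discussed in \S\ref{heckealgebrarandompermutation}.

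You are also right to emphasize that there is no proof here: the statement is a conjecture, the random-polynomial/random-permutation model for $\mathbb{T}_{\mathrm{red},\chi}$ is a heuristic with no known route to rigor, and independence across Atkin--Lehner eigenspaces and across levels is an additional unproven assumption. The paper itself presents the statement as a Conjecture, motivates it exactly as you describe, and supports it with the numerical data of \S\ref{data} rather than with an argument. Your framing --- formal corollary plus heuristic derivation plus an honest acknowledgement of what would actually be required --- is the right way to present it.
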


The rough shape of Conjectures \ref{randomlargepolynomial} and \ref{hasserandomlargepolynomial}, when $U$ consists of squarefree integers having a \emph{bounded number of prime factors}, is consistent with Bhargava's local-to-global heuristic for counting locally modular representations $r.$  For such sets $U,$ both models predict a bounded number of locally modular representations per conductor on average.  And for such sets $U,$ both models predict only $\frac{\text{constant}}{p}$ representations per conductor on average which are locally modular and which satisfy the Hasse bound at 2.  However, these two models make genuinely different predictions when $\omega(N)$ is large.

\subsection{Locally modular homomorphisms, integrality of Hecke eigenvalues, and the $O(X^{5/6})$ bound}

Isogeny classes of elliptic curves $E$ of conductor $N$ over $\Q$ are in bijection with Hecke eigenforms in $S_2(N,\overline{\Q})_{\mathrm{new}}$ having \emph{Hecke eigenvalues in $\Z$}  \cite{W} \cite{TW} \cite{BCDT}.
There are roughly $\frac{X^2}{3}$ Hecke eigenforms of level less than $X,$ far more than the roughly $X^{5/6}$ isogeny classes of elliptic curves over $\Q$ of conductor less than $X$ expected in Problem $(N).$  Evidently, it is the integrality of Hecke eigenvalues which severely limits the number of Hecke eigenforms which could possibly be associated with Hecke eigenvalues  

Typically, the minimal field of definition of the Galois representation $r_{f,p}: G_\Q \rightarrow \GL_2(\overline{\FF}_p)$ associated to a Hecke eigenform $f$ has large degree over $\FF_p.$  This is why the upper bound $O(X \log X)$ from Conjecture \ref{randomlargepolynomial} - which an upper bound of $O(X \log X)$ for the number of elliptic curves of conductor les than $X$ modulo the Diophantine conjectures of \S \ref{diophantineconjectures} - is significantly smaller than the $\frac{X^2}{3}$ Hecke eigenforms of level less than $X.$  

On the other hand, the $O(X \log X)$ upper bound for the number of elliptic curves of conductor less than $X$ implied by Conjecture \ref{randomlargepolynomial} is significantly greater than the order of magnitude $X^{5/6}$ predicted for Problem $(N).$  This is because the condition of $r_{f,p}$ having $\FF_p$ as its minimal field of definition does not completely capture the integrality of the Hecke eigenvalues of $f.$  Namely, if $K$ denotes the field generated by the eigenvalues of a Hecke eigenform $f$ and $O_K$ denotes its ring of integers, then every $\Z_p$-factors of $O_K \otimes \Z_p$ gives rise to one locally modular $r$ having minimal field of definition $\FF_p.$     

The improvement of the upper bound from the previous paragraph to $O(\frac{1}{p} X \log X)$ in Conjecture \ref{hasserandomlargepolynomial} reflects a second shadow of the integrality of the eigenvalues of Hecke eigenforms associated with elliptic curves over $\Q.$  Indeed, even if $O_K \otimes \Z_p$ admits a ring homomorphism $\pi: O_K \otimes \Z_p \rightarrow \Z_p,$ there is no reason that $\pi(a_f(2))$ mod $p$ should equal the reduction mod $p$ of a rational integer of archimedean absolute value $\leq 2 \sqrt{2}.$

\subsection{Outline}
\S \ref{diophantineconjectures} reviews several well-known Conjectures which could be used to reduce bounding the number of elliptic curves over $\Q$ with squarefree conductor $< X$ to bounding the number of conjugacy classes of locally modular (surjective) homomorphisms $r: G_\Q \twoheadrightarrow \GL_2(\FF_p)$ of squarefree conductor $< X.$ 

\S \ref{bhargava} reviews Bhargava's local-to-global principle for counting surjective homomorphisms $r: G_\Q \twoheadrightarrow G$ for finite groups $G.$ \S \ref{localpropertiesmodpellipticcurve} describes the local properties of locally modular representations in concrete terms. \S \ref{mass} computes the local masses for locally modular representations.  \S \ref{take1} describes how the local-to-global heuristic predicts a bounded number of locally modular $r$ of fixed conductor on average.  \S \ref{take2} describes the extra saving afforded by imposing the Hasse bound at a fixed small prime.  

\S \ref{atkinlehner} descirbes a different model for counting locally modular representations $r$ accounting for degeneracies in $S_2(N,\FF_p)_{\mathrm{new}}$ caused by Atkin-Lehner involutions. 

\S \ref{data} describes data suggesting that, indeed, the average number of locally modular representations of squarefree Artin conductor $N$ appears to grow like $2^{\omega(N)}.$

\subsection{Acknowledgements}
The author would like to thank Arul Shankar for introducing him to Problems $(N)$ and $(\Delta)$ and for many interesting conversations related to both of these problems.  He would like to thank Lillian Pierce and Melanie Wood for their comments on his letter to them regarding these problems and number field counting heuristics.  He would also like to thank Jordan Ellenberg for enlightening discussions about Malle's and Bhargava's heuristics for counting $\GL_2(\FF_p)$-number fields.    

This paper owes a great mathematical debt to Bhargava's local-to-global heuristics for counting number fields.  The author would like to thank Manjul Bhargava for his inspirational heuristics.

\section{One approach to counting elliptic curves over $\Q$ of squarefree conductor} \label{diophantineconjectures}
Fix a prime $p.$  Let $G_\Q$ be the absolute Galois group of $\Q.$  

\subsection{Counting number fields instead of counting elliptic curves}
Given an elliptic curve $E / \Q,$ consider 
$$r_{E,p}: G_\Q \rightarrow \GL_2(\mathbb{F}_p),$$
the Galois action on $p$-torsion points.  

\begin{thm}[Mazur]
The number of isomorphism classes within every isogeny class of elliptic curves over $\Q$ is uniformly bounded.
\end{thm}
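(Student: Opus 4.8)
The plan is to prove Mazur's theorem on the boundedness of isogeny classes. The key is Mazur's classification of rational isogenies of prime degree, together with the structure of the isogeny graph attached to an elliptic curve over $\Q$. First I would recall that two elliptic curves over $\Q$ are isogenous if and only if they are connected by a chain of $\Q$-rational isogenies, and — using the existence of dual isogenies and factorization of isogenies into prime-degree pieces — that it suffices to bound the diameter and valence of the \emph{prime isogeny graph}: the graph whose vertices are the $\Q$-isomorphism classes in a fixed isogeny class and whose edges are $\Q$-rational isogenies of prime degree.

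The main input is Mazur's theorem on torsion/isogenies: if $E/\Q$ admits a $\Q$-rational isogeny of prime degree $\ell$, then $\ell \in \{2,3,5,7,11,13,17,19,37,43,67,163\}$ (equivalently, the modular curve $X_0(\ell)$ has a non-cuspidal rational point only for these $\ell$). In particular there is an absolute bound $\ell_0 = 163$ on the prime degrees of $\Q$-rational isogenies. Next I would bound the valence: for a fixed prime $\ell \le \ell_0$, the number of distinct $\Q$-rational $\ell$-isogenies out of a given $E$ is at most the number of $G_\Q$-stable lines in $E[\ell] \cong \FF_\ell^2$, hence at most $\ell + 1$; so the prime isogeny graph has degree at most $\sum_{\ell \le \ell_0}(\ell+1)$, an absolute constant. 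It remains to bound the diameter. For each $\ell$, a standard descent argument shows that a chain of $\Q$-rational $\ell$-isogenies of length greater than some $d_\ell$ forces the $\ell$-adic Tate module to contain a $G_\Q$-stable line with an unusually reducible, non-split local behaviour, which — via the theory of the $\ell$-adic representation, Serre's results on the image of Galois, or an explicit analysis of the $\ell$-power-level modular curves $X_0(\ell^k)$ having few rational points — cannot persist; thus each connected $\ell$-component has bounded diameter, and since only finitely many primes $\ell$ occur, the whole graph has diameter $\le D_0$ for an absolute $D_0$. A connected graph of bounded degree and bounded diameter has boundedly many vertices, giving the claim.

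The hard part is controlling the diameter of the $\ell$-isogeny chains uniformly, i.e. ruling out arbitrarily long strings of rational $\ell$-isogenies. One clean way is to invoke the fact that for $\ell \in \{2,3,5,7\}$ the curves $X_0(\ell^k)$ with $k$ large have genus $\ge 2$ and hence (by Faltings, or in these small cases by explicit classical arguments) only finitely many rational points, while for $\ell \ge 11$ even $X_0(\ell^2)$ already has only cuspidal/CM rational points; combined with Mazur's prime-degree list this bounds $k$, hence bounds chain lengths. An alternative, more self-contained route is Kenku's explicit determination of all possible isogeny-class graphs over $\Q$, which shows directly that the number of $\Q$-isomorphism classes in any isogeny class is at most $8$; I would cite this as the sharp form of the statement. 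Either way, once the degree and diameter of the prime isogeny graph are bounded by absolute constants, the uniform bound on the size of an isogeny class is immediate.
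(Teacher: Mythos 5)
The paper states this result as a known theorem and gives no proof (the uniform bound --- at most $8$ curves per $\Q$-isogeny class --- is due to Kenku, building on Mazur), so there is no in-paper argument to compare against; what follows is an assessment of your sketch on its own terms. Your outline is the standard route and is essentially correct: Mazur's classification bounds the primes $\ell$ that can appear as degrees of $\Q$-rational isogenies, the count of $G_\Q$-stable lines in $E[\ell]$ bounds the valence of the prime isogeny graph, and the determination of non-cuspidal rational points on the curves $X_0(\ell^k)$ (Mazur, Ogg, Kenku) bounds the lengths of $\ell$-isogeny chains. The one soft spot is the sentence about the Tate module acquiring ``unusually reducible, non-split local behaviour which cannot persist'' --- as written that is a placeholder, not an argument --- but you immediately supply the correct replacement, namely the explicit classification of $N$ with $X_0(N)(\Q)$ containing a non-cuspidal point, and Kenku's theorem is indeed the cleanest citation for the sharp form. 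A streamlining worth noting: any two $\Q$-isogenous curves are joined by a $\Q$-rational \emph{cyclic} isogeny (quotient the kernel by the largest $E[m]$ it contains, up to isomorphism of the source), and the $X_0(N)$ classification then bounds that cyclic degree by an absolute constant $N_0$; hence the isogeny class of $E$ has at most as many members as $E$ has $G_\Q$-stable cyclic subgroups of order $\leq N_0$, an absolutely bounded quantity. This sidesteps the bounded-degree/bounded-diameter graph bookkeeping and the attendant care about non-backtracking paths.
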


Thus, it suffices to bound the total number of isogeny classes of elliptic curves of conductor less than $X.$

Our main strategy is to bound the total number of isogeny classes of elliptic curves of conductor less than $X$ in terms of the total count of number fields which could conceivably arise as the splitting fields of their $p$-torsion Galois representations.  The Frey-Mazur conjecture implies that essentially no information is lost in the passage to $p$-torsion.   
\begin{conj}[Frey-Mazur]
Suppose $p \geq 17.$  The assignment
$$\text{ isogeny class of } E \leadsto r_{E,p}$$
is injective.
\end{conj}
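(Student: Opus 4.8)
The plan is to give the conjecture a moduli-theoretic form and then try to eliminate the resulting rational points \emph{uniformly}. Suppose $E_1, E_2 / \Q$ are non-isogenous elliptic curves admitting an isomorphism of Galois modules $\phi \colon E_1[p] \xrightarrow{\ \sim\ } E_2[p]$. Such a $\phi$ necessarily scales the Weil pairing by a similitude factor $\lambda \in \FF_p^\times$, and, after a choice of basis, the pair $(E_2,\phi)$ defines a $\Q$-rational point on a twist $X_{E_1}^{(\lambda)}(p)$ of the classical modular curve $X(p)$ (the twist being governed by the cocycle cutting out $E_1[p]$), a curve whose genus grows without bound with $p$. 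For each fixed $E_1$, Faltings' theorem then leaves only finitely many such points, and the ``trivial'' ones are precisely those arising from a $p$-isogeny $E_1 \to E_2$. So the conjecture reduces to: for $p \geq 17$, the curves $X_{E_1}^{(\lambda)}(p)$ have no $\Q$-points beyond cusps, the finitely many CM points, and isogeny points --- and this must hold uniformly as $E_1$ ranges over $\Q$, equivalently as $j(E_1)$ ranges over $\Q$.

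First I would dispose of the boundary and special cases. The cusps of an $X(p)$-type curve are defined over $\Q(\zeta_p)^+$ and hence are not $\Q$-rational for $p \geq 7$; and since there are only finitely many CM $j$-invariants over $\Q$, the congruences $E_1[p] \cong E_2[p]$ among CM curves can be enumerated by hand and forced into small $p$. The substantive input is then a bound on the number of $\Q$-points on a curve of growing genus varying in the one-parameter family indexed by $j(E_1) \in \Q$ --- precisely the regime of the uniform Mordell results of Dimitrov--Gao--Habegger and Kühne (building on Vojta, Faltings, and the new-gap-principle height machinery), in which $\#C(\Q)$ is controlled in terms of the genus and the Mordell--Weil rank alone. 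Feeding the total space $X^{(\lambda)}(p) \to \mathbb{A}^1_{j_1}$ of these curves into such bounds, and combining with known results in the fixed-$E_1$ regime (no exceptional points once $p$ exceeds a threshold depending on $E_1$), one would hope to interpolate between the ``small-$E_1$'' and ``large-$p$'' ranges; the heuristics of \S\ref{bhargava} supply the matching probabilistic expectation that such coincidences have density zero.

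The hard part --- and the reason the conjecture remains open --- is exactly this uniformity. Uniform-Mordell bounds control the \emph{number} of rational points but not their \emph{location}, so they cannot by themselves exclude even a single sporadic congruence $E_1[p] \cong E_2[p]$; to rule one out one needs an upper bound on the height of the putative point $(E_2,\phi)$ in terms of $p$, and no such bound is available --- it is a height/$ABC$-type input of essentially the same strength as the Diophantine conjectures assumed in \S\ref{diophantineconjectures}. A Chabauty--Kim or descent attack on $X_{E_1}^{(\lambda)}(p)$ founders on the absence of any mechanism bounding the Mordell--Weil rank of its Jacobian uniformly over the family. Realistically, then, I expect that the most one can currently prove is (i) effective Frey--Mazur for each fixed $E_1$ with an explicit threshold $p_0(E_1)$, and (ii) Frey--Mazur for individual small primes $p$ via level-lowering and the modular method (Kraus, Halberstadt--Kraus, and the surrounding circle of ideas); a full uniform proof appears to require a genuinely new Diophantine ingredient, which is why the statement is taken here as a hypothesis rather than established.
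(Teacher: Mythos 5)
The statement you were given is the Frey--Mazur \emph{Conjecture}, and the paper does not prove it: it is cited as an open Diophantine hypothesis (alongside Serre's uniformity conjecture) and then used only ``on average'' as an input to Proposition~\ref{assumingbhargavaheuristic}. You correctly recognize this and do not claim a proof; your write-up is a survey of the standard moduli-theoretic framing and of why the uniform statement is currently out of reach, and as such it is accurate. The translation into $\Q$-points on twists $X_{E_1}^{(\lambda)}(p)$ of $X(p)$, the observation that Faltings disposes of each fixed $E_1$ (non-effectively, and with a threshold $p_0(E_1)$), the role of uniform Mordell bounds (Dimitrov--Gao--Habegger, K\"uhne) in controlling the \emph{count} but not the \emph{height} of rational points, and the resulting need for an $ABC$/height-type input are all correctly identified as the genuine obstruction. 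One point you could add, since the paper flags it explicitly: Bakker--Tsimerman \cite{BT} have proved the analogue of Frey--Mazur for elliptic curves over the function field of a complex curve, which is the strongest unconditional evidence in its favor; over $\Q$ the paper, like you, treats the statement as an assumption rather than a theorem, and in fact only uses the weaker ``on average'' consequence that the number of isogeny classes of conductor $<X$ is bounded by the number of $\GL_2(\FF_p)$-number fields with the prescribed local behavior.
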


\begin{rem}
Remarkable recent progress has been made on the Frey-Mazur conjecture for elliptic curves over the function field of a complex curve by Bakker-Tsimerman \cite{BT}.  For our present purposes, we will only need the Frey-Mazur theorem ``on average":
\begin{align*}
&\# \{ \text{isogeny classes of elliptic curves over } \Q \text{ of conductor } < X  \} \\
&\leq \# \{ \GL_2(\FF_p) \text{-number fields for which the associated } r: G_\Q \rightarrow \GL_2(\FF_p) \\
& \text{of conductor } < X \text{ satisfies ``the right local properties"}  \}
\end{align*}
\end{rem}

Bhargava's heuristics pertain to \emph{surjective} homomorphisms $r: G_\Q \twoheadrightarrow \GL_2(\FF_p).$  It is widely believed that $r_{E,p}$ is surjective for $p$ sufficiently large: 
\begin{conj}[Serre's uniformity conjecture]
Suppose $p > 37.$  Then the homomorphism
$$r_{E,p}: G_\Q \twoheadrightarrow \GL_2(\FF_p)$$
is surjective.
\end{conj}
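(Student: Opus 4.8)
The plan is to run Serre's classification of the subgroups of $\GL_2(\FF_p)$. First a caveat: the statement should be read for $E$ \emph{without} complex multiplication, since a CM curve never has surjective mod $p$ image. Granting this, if $r_{E,p}$ fails to be surjective then its image lies in one of four families of proper subgroups: a Borel subgroup; the normalizer of a split Cartan subgroup; the normalizer of a nonsplit Cartan subgroup; or a subgroup whose image in $\PGL_2(\FF_p)$ is isomorphic to $A_4$, $S_4$, or $A_5$. The strategy is to exclude each family in turn for $p > 37$.

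The exceptional and Borel cases are classical. If the projective image is $A_4$, $S_4$, or $A_5$, an elementary group-theoretic argument (Dickson's theorem, or Serre's direct computation with the orders of semisimple and unipotent elements) forces $p$ to be small, in particular $p \le 13$, so this cannot occur for $p > 37$. If the image lies in a Borel subgroup, then $E$ admits a $\Q$-rational $p$-isogeny and hence gives a non-cuspidal point of $X_0(p)(\Q)$; by Mazur's theorem on rational isogenies this set consists only of cusps once $p > 37$ and $p \notin \{43, 67, 163\}$, and for those three remaining primes the only such curves are CM, which our hypothesis excludes.

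For the split Cartan case a non-CM, non-cuspidal point would yield a rational point on $X_{\mathrm{split}}^+(p)$. Rather than reprove it, I would invoke the resolution of this case: Bilu--Parent, with the case $p=13$ completed by Bilu--Parent--Rebolledo, show via a Mordell--Weil sieve combined with formal-immersion criteria applied to a suitable optimal quotient of $J_0(p^2)$ that for $p \ge 11$, $p \ne 13$, every rational point of $X_{\mathrm{split}}^+(p)$ is a cusp or a CM point. Together with the bookkeeping for small $p$, this disposes of the split Cartan case for $p > 37$.

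The main obstacle is the \emph{nonsplit Cartan case}: ruling out that the image of $r_{E,p}$ sits inside the normalizer of a nonsplit Cartan, equivalently that $E$ gives a non-CM, non-cuspidal rational point on $X_{\mathrm{ns}}^+(p)$. The classical machinery breaks down here — these curves have large genus, the Mordell--Weil rank of their Jacobians is typically too large for the Chabauty--Coleman method, and there is no further Atkin--Lehner quotient to pass to. The promising route is non-abelian (quadratic) Chabauty in the style of Kim and Balakrishnan--Dogra, which settled $p=13$ in work of Balakrishnan--Dogra--M\"uller--Tuitman--Vonk; the genuinely hard part is to make such an argument \emph{uniform in $p$} — controlling the relevant Selmer schemes, $p$-adic height pairings, and the combinatorics of reductions modulo auxiliary primes $\ell$ simultaneously for all $p > 37$. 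No such uniform argument is presently known, which is exactly why this statement remains conjectural.
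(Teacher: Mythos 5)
You have correctly recognized that the statement is a conjecture, not a theorem: the paper states Serre's uniformity conjecture without proof, precisely because no proof exists. Your survey of the known partial results is accurate — Serre handles the exceptional projective images ($A_4$, $S_4$, $A_5$) for small $p$; Mazur's isogeny theorem disposes of the Borel case for $p>37$ once CM curves are excluded; Bilu--Parent (with Bilu--Parent--Rebolledo for $p=13$) settle the split Cartan case; and the normalizer-of-nonsplit-Cartan case remains open, with only $p=13$ resolved by Balakrishnan--Dogra--M\"uller--Tuitman--Vonk via quadratic Chabauty, and no argument uniform in $p$ currently known. This matches the state of the art, and the obstruction you identify — uniformity over $p$ in the nonsplit Cartan case — is exactly the missing ingredient. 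Since the paper itself offers no proof (the statement appears as Conjecture~2.3, invoked as a hypothesis elsewhere), there is nothing to compare your ``proof'' against; your proposal is best read as a correct explanation of why the statement is believed and why it cannot presently be proved.

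Two small caveats worth recording: the conjecture should indeed be understood for non-CM $E$ (as you note), since a CM curve always has mod-$p$ image contained in the normalizer of a Cartan subgroup for all but finitely many $p$; and the number ``37'' is chosen because $X_0(37)$ has a non-cuspidal, non-CM rational point, so the Borel exclusion genuinely requires $p>37$ rather than merely $p$ large.
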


\section{Estimating the number of $\GL_2(\FF_p)$-number fields with good local properties via Bhargava's heuristics} \label{bhargava}
\subsection{Redux of Bhargava's heuristics} \label{redux}
Let $G_\Q := \mathrm{Gal}(\overline{\Q}/ \Q).$  For a place $v,$ let $G_v$ be the decomposition group at $v.$

Let $G$ be a finite group.  Let $\LL_v  := \Hom(G_v, G)^\#$ denote the finite set of conjugacy classes of homomorphisms $G_v \rightarrow G.$  There is a natural measure on $\LL_v$ given by $m_v(\{ \phi \}) = \frac{1}{|Z_G(\phi)|}.$  The mass of the unramified homomorphisms in $\LL_v$ equals 1.  We form  $\LL = \prod'_v \LL_v,$ where the product is restricted with respect to the unramified homomorphisms.  We endow $\LL$ with the product measure $m = \prod'_v m_v.$

Let $\G$ denote the conjugacy classes of surjective homomorphisms $G_{\Q} \twoheadrightarrow G.$  There is a natural localization map
$$\mathrm{loc}: \G \rightarrow \LL.$$  

\begin{heuristic}[Bhargava's local-global heuristic for counting $G$-number fields] \label{generalbhargavaheuristic}
For large subsets $U \subset \LL,$
\begin{equation} \label{bhargavaheuristic}
\sum_{r: \mathrm{loc}(r) \in U} \frac{1}{|Z_G(r)|} \sim m(U).
\end{equation}
\end{heuristic}

Bhargava \cite{Bhargava} originally applied this heuristic to conjecture asymptotic formulas for  $S_n$-number fields of bounded discriminant.  See \cite[\S 6]{BV} for some discussion on applying this heuristic in other contexts, especially for $G$ a finite group of Lie type.  \medskip

We will apply Heuristic \ref{generalbhargavaheuristic} for $G = \GL_2(\FF_p)$ in \S \ref{take1}, \ref{take2} to give a heuristic count for the number of $\GL_2(\FF_p)$-number fields with local properties mimicking those the $r_{E,p}.$

\subsection{Local properties of $r_{E,p}$} \label{localpropertiesmodpellipticcurve}
The representation $r_{E,p}$ has well-understood local properties.  Fix a finite set of primes $S$ not including $p.$  Suppose $E$ has good reduction outside of $S$ and has squarefree conductor $N(S) = \prod_{v \in S} v$ for some finite set of primes $S.$  Then
\begin{itemize}
\item[(a)]
Suppose $v \notin S \cup \{ p \}.$  Then $r_{E,p}$ is unramified at $v.$

\item[(b)]
Suppose $v \in S.$  Then $r_{E,p}$ is \emph{Steinberg} at $v,$ i.e. its restriction to inertia at $v$ generates a principal unipotent subgroup of $\GL_2(\FF_p).$  This implies, in particular, that $r_{E,p}$ is tamely ramified. 

\item[(c)]
Suppose $v = \infty.$  Then complex conjugation is \emph{odd},  
$$r_{E,p}(\text{complex conjugation}) \sim \left( \begin{array}{cc} 
1 & 0 \\
0 & -1
\end{array} \right)$$

\item[(d)]
Suppose $v = p.$  Because $E$ has good reduction at $p,$ its reduction is either ordinary or supersingular.  If its reduction is \emph{ordinary}, then 
$$r_{E,p}|_{G_p} \sim \left( \begin{array}{cc} \chi \cdot e & \ast \\ 0 & e^{-1}  \end{array} \right),$$
where $\chi$ denotes the mod $p$ cyclotomic character and $e$ is an unramified character of $G_p.$  

If $E$ has good \emph{supersingular} reduction, then $r_{E,p}|_{G_p}$ is irreducible, tamely ramified, and over $\overline{\FF}_p$ becomes the direct sum of the two (Galois conjugate) fundamental characters of tame inertia of level $2.$ In particular, there is only one such representation and its centralizer is the center of $\GL_2(\mathbb{F}_p)$ \cite{Serre}.
\end{itemize}

\subsection{Mass computations} \label{mass} 
We next compute the (local) masses of the conjugacy classes of homomorphisms $r: G_v \rightarrow \GL_2(\mathbb{F}_p)$ corresponding to the local conditions from (a), (b), (c), and (d).  Similar computations are done in \cite[\S 6]{BV}:

\begin{itemize}
\item[(a$'$)] 
Suppose $v \notin S \cup \{ p \}$:
The mass of all unramified representations $r: G_v \rightarrow \GL_2(\mathbb{F}_p)$ equals $1.$

\item[(b$'$)]
For $v \in S$:  The mass of all Steinberg homomorphisms
$r: G_v \rightarrow \GL_2(\FF_p)$
is 1.   

Indeed, let $F, U$ be generators of tame inertia at $v$ with $F U F^{-1} = U^v.$  Since $r$ is Steinberg, we may assume with no loss of generality that 
$$r(U) = \left( \begin{array}{cc} 1 & 1 \\ 0 & 1 \end{array} \right).$$

The above relation between $F,U$ implies that 
$$r(F) = \left( \begin{array}{cc} v \cdot d &  b \\ 0 & d \end{array} \right)$$
for some $d \in \FF_p^{\times}.$  
\begin{itemize}
\item
Suppose $v = 1 \mod p.$  Then $r(F)$ lies in the centralizer of $r(U),$ which is abelian.  Thus, as $b,d$ vary, all of the above representations $r$ are non-conjugate.  Furthermore, every $r$ above has centralizer of order $p(p-1).$  Therefore, the mass at $v$ of Steinberg representations equals $p(p-1) / p(p-1) = 1$ in this case.

\item
Suppose $v \neq 1 \mod p.$  The matrix $\left(\begin{array}{cc} 1 & \frac{b}{d (v-1)} \\ 0 & 1 \end{array} \right)$ conjugates $r$ to the representation
$$r'(U) =\left( \begin{array}{cc} 1 & 1 \\ 0 & 1 \end{array} \right) , r'(F) = \left( \begin{array}{cc} v\cdot d & 0 \\ 0 & d \end{array} \right).$$
The $p-1$ representations $r'$ are all non-conjugate and the centralizer of $r'$ equals the center of $\GL_2(\FF_p).$  Therefore,
the mass at $v$ of Steinberg representations equals $(p-1) / (p-1) = 1$ in this case too.
\end{itemize}

\item[(c$'$)]
Suppose $v = \infty.$  The centralizer in $\GL_2(\mathbb{F}_p)$ of $\left( \begin{array}{cc} 1 & 0 \\ 0 & -1 \end{array} \right)$ has order $(p-1)^2.$  Therefore,  the mass of all odd representations of $G_{\infty}$ equals $\frac{1}{(p-1)^2}.$

\item[(d$'$)]
Suppose $v = p,$ the most interesting case.  We first compute the mass of ordinary representations, i.e. those conjugate to 
$$r \sim \left( \begin{array}{cc} \chi \cdot e  & \ast \\ 0 & e^{-1} \end{array} \right).$$
where $\chi$ is the mod p cyclotomic character and $e$ is an unramified character $G_p \rightarrow \FF_p^{\times}.$  Conjugacy classes of such homomorphisms are in bijection with extensions

$$\mathrm{Ext}^1(e^{-1} , \chi \cdot e) = \mathrm{Ext}^1(1, \chi \cdot e^2) = H^1( G_p, \chi \cdot e^2).$$

A Galois cohomology computation shows that the mass of all ordinary representations of $G_p$ has size $3p-1 + \frac{1}{p-1}.$  See \S \ref{massp}. 

As commented in (d), the mass of supersingular representations $r: G_p \rightarrow \GL_2(\mathbb{F}_p)$ equals $\frac{1}{p-1}.$   
\end{itemize}

\begin{defn}
Call representations $r: G_\Q \rightarrow \GL_2(\FF_p)$ satisfying the local conditions (a$'$),(b$'$),(c$'$),(d$'$) \emph{locally modular with respect to $S$} (or just \emph{locally modular} if $S$ is understood).
\end{defn}

\subsection{Bhargava's heuristics, take 1} \label{take1}
Let $G(S)$ denote the number of surjective homomorphisms $r: G_\Q \twoheadrightarrow \GL_2(\mathbb{F}_p)$ which are locally modular with respect to $S.$  According to Bhargava's Heuristic \ref{generalbhargavaheuristic}, the mass of such global Galois representations, which equals  
$$\sum_{r \text{ locally modular w.r.t. } S} \frac{1}{ | \text{ centralizer in } \GL_2(\mathbb{F}_p) \text{ of } r | } = \frac{1}{p-1} G(S),$$
should asymptotically equal $m(S)$ on average, i.e.

\begin{heuristic}[local-to-global heuristic for counting $\GL_2(\FF_p)$-number fields] \label{GL2bhargavaheuristictake1}
For ``large" subsets $U$ of the (squarefree) integers 

\begin{equation} \label{bhargavaheuristictake1}
\frac{1}{p-1} \sum_{N(S) \in U \cap [1,X]} G(S) \sim \sum_{N(S) \in U \cap [1,X]} m(S) \text{ where } N(S) := \prod_{p \in S} p.
\end{equation}
\end{heuristic}

Combining our calculations from the previous section,  

\begin{align} \label{finallocalmass}
m(S) &= \mathrm{mass}_{\infty} \cdot \mathrm{mass}_p \cdot \prod_{v \in S} \mathrm{mass}_v  \cdot \prod_{v \notin S} \mathrm{mass}_v \nonumber \\
&= \frac{1}{(p-1)^2} \cdot \left( 3p - 1 + \frac{2}{p-1} \right) \cdot 1 \cdot 1.
\end{align}
According to Heuristic \ref{GL2bhargavaheuristictake1} and \eqref{finallocalmass}, the number of Galois representations locally modular with respect to $S$ should be at most about $3$ on average.\footnote{Our mass computation assumes that $r|_{G_p}$ was either good ordinary or singular.  It didn't account for elliptic curves of bad reduction.  But since we're thinking of $p$ as a fixed large prime, almost all Galois representations associated with elliptic curves will fall into either the good ordinary or good supersingular masses accounted for above.}

\emph{Assuming Heuristic \ref{GL2bhargavaheuristictake1} for the set $U$ of all squarefree integers} would imply Thue's average $O(1)$ result; it would improve upon the average $O_{\epsilon}(X^{\epsilon})$ from \cite{DK}; but it would fail to capture the average $o(1)$ result required for Problems $(N)$ and $(\Delta).$  \medskip

We believe that when $U = $ all squarefree integers, Heuristic \ref{GL2bhargavaheuristictake1} may undercount the number of $\GL_2(\FF_p)$-extensions satisfying the expected local properties.  This will be discussed further in \S \ref{atkinlehner}.

\subsection{Bhargava's heuristics, take 2} \label{take2}
The elliptic curves of conductor less than $X$ are \emph{far more locally constrained at unramified small primes} than we've accounted for.  Namely, the Frobenius trace $a_v(E)$ satisfies the Hasse bound: it is an integer of absolute value at most $ 2\sqrt{ v }.$  So if $v$ is less than $\frac{p^2}{16},$ then $\tr \; r_{E,p}(\mathrm{Frob}_v) = a_v(E) \mod p$ is non-trivially constrained.

Accounting for this Hasse bound constraint, Bhargava's heuristics suggest 

\begin{heuristic}[local-to-global heuristic for counting $\GL_2(\FF_p)$-number fields satisfying the Hasse bound at 5] \label{GL2bhargavaheuristictake2} 
Let $G'(S)$ denote the number of conjugacy classes of homomorphisms $r: G_\Q \rightarrow \GL_2(\FF_p)$ which are locally modular with respect to $S,$ which have odd conductor $N(S),$ and which satisfy the Hasse bound at 2, i.e. $\tr( r(\frob_2))$ is the mod $p$ reduction of an integer $n$ satisfying $|n| \leq \lfloor 2 \sqrt{2} \rfloor = 2.$ For ``large" subsets $U$ of the odd squarefree integers, 
$$\sum_{N(S) \in U \cap [1,X]} \frac{1}{p-1} G'(S) \sim \sum_{N(S) \in U \cap [1,X]} m(S) \cdot \text{probability a conjugacy class satisfies the Hasse bound at } 2.$$
\end{heuristic}

\begin{prop}
Let $U$ be a subset of the squarefree integers.  Let $U^{(2,p)}$ denote $U$ with all multiples of $2$ and $p$ removed.  Assume Heuristic \ref{GL2bhargavaheuristictake2}.  There is a constant $C$ for which 
$$\limsup_{X \to \infty} \frac{ \# \{ \text{elliptic curves with squarefree conductor } \in U^{(2,p)} \cap [1, X] \} }{\# U^{(2,p)} \cap [1,X]} \leq \frac{C}{p}.$$
In particular, if $U^{(2,p)}$ has density in $U$ at least $\epsilon > 0$ for all $p$ and if the set of elliptic curves of conductor in $U^{(2,p)}$ has density at least $\epsilon' > 0$ in the set of elliptic curves of conductor in $U,$ then 
$$\limsup_{X \to \infty} \frac{ \# \{ \text{elliptic curves with squarefree conductor } \in U \cap [1, X] \} }{\# U \cap [1,X]} = 0.$$
\end{prop}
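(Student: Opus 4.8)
The plan is to fix a prime $p>37$ (so that Serre's uniformity conjecture and Frey-Mazur are available) and to bound the number of elliptic curves of conductor $N\in U^{(2,p)}\cap[1,X]$ by the quantity $G'(S)$ attached to $S=\{v:v\mid N\}$, then sum over such $N$ and feed the result into Heuristic~\ref{GL2bhargavaheuristictake2}; the factor $1/p$ will come from the local masses, and only when proving the ``in particular'' would I let $p\to\infty$. Write $u(X)=\#(U\cap[1,X])$ and $u'(X)=\#(U^{(2,p)}\cap[1,X])$, and let $e(X)$, $e'(X)$ be the numbers of elliptic curves of conductor in $U\cap[1,X]$ and in $U^{(2,p)}\cap[1,X]$ respectively (so $e'$ and $u'$ also depend on $p$). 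By Mazur's theorem each isogeny class contains at most an absolute constant $C_{\mathrm{Maz}}$ of isomorphism classes, so it suffices to bound the number of isogeny classes of conductor $N$. An elliptic curve $E$ of squarefree conductor $N\in U^{(2,p)}$ has multiplicative reduction at each $v\mid N$ and good reduction at $2$ and at $p$, so by the local analysis of \S\ref{localpropertiesmodpellipticcurve} the representation $r_{E,p}$ is unramified outside $S\cup\{p\}$, Steinberg (hence of conductor exponent exactly $1$) at each $v\in S$, odd at $\infty$, and ordinary or supersingular at $p$; thus $r_{E,p}$ is locally modular with respect to $S$, has $\mathrm{cond}(r_{E,p})=N(S)=N$ odd, and by the Hasse bound satisfies $\tr r_{E,p}(\frob_2)=a_2(E)\bmod p$ with $|a_2(E)|\le\lfloor2\sqrt2\rfloor=2$. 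Serre's uniformity conjecture makes $r_{E,p}$ surjective and Frey-Mazur makes $E\mapsto r_{E,p}$ injective on isogeny classes (equivalently, one quotes the ``Frey-Mazur on average'' inequality of the Remark in \S\ref{diophantineconjectures}), so that $\#\{\text{isogeny classes of conductor }N(S)\}\le G'(S)$.

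Summing this inequality over $N=N(S)\in U^{(2,p)}\cap[1,X]$ and applying Heuristic~\ref{GL2bhargavaheuristictake2} to the subset $U^{(2,p)}$ of the odd squarefree integers yields
\[
\frac{1}{p-1}\sum_{N(S)\in U^{(2,p)}\cap[1,X]}G'(S)\ \sim\ m(S)\,q_2\,u'(X),
\]
where $q_2$ is the probability a conjugacy class satisfies the Hasse bound at $2$ and $m(S)$ is given by \eqref{finallocalmass}, independent of $S$ because $2,p\notin S$. The one genuinely numerical input is that $m(S)$ and $q_2$ are both $O(1/p)$: from \eqref{finallocalmass}, $m(S)=\tfrac{1}{(p-1)^2}(3p-1+\tfrac{2}{p-1})$, so the large local mass $\approx 3p$ at $p$ is overwhelmed by the archimedean factor $(p-1)^{-2}$, while $q_2=\#\{g\in\GL_2(\FF_p):\tr g\in\{h_1,\dots,h_5\}\}/|\GL_2(\FF_p)|\asymp 1/p$ because there are five admissible trace values. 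Hence $(p-1)\,m(S)\,q_2=O(1/p)$, and combining this with $C_{\mathrm{Maz}}$ and the inequality of the previous paragraph gives
\[
\limsup_{X\to\infty}\frac{e'(X)}{u'(X)}\ \le\ C_{\mathrm{Maz}}\,(p-1)\,m(S)\,q_2\ \le\ \frac{C}{p}
\]
for an absolute constant $C$, uniform in $p$; this is the first assertion.

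For the ``in particular,'' the density hypotheses say $\liminf_X u'(X)/u(X)\ge\epsilon$ and $\liminf_X e'(X)/e(X)\ge\epsilon'$ for every $p$. Since the quantity $\limsup_X e(X)/u(X)$ is independent of $p$, it suffices to bound it for each fixed $p>37$, and for this I would factor $e(X)/u(X)=\bigl(e(X)/e'(X)\bigr)\bigl(e'(X)/u'(X)\bigr)\bigl(u'(X)/u(X)\bigr)$: the third factor is $\le1$ since $U^{(2,p)}\subseteq U$, the first has $\limsup_X\le1/\epsilon'$, and the second has $\limsup_X\le C/p$ by the first assertion; as the $\limsup$ of a product of nonnegative sequences is at most the product of the $\limsup$'s, $\limsup_X e(X)/u(X)\le C/(\epsilon'p)$ for every $p>37$, hence $0$. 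Granting Heuristic~\ref{GL2bhargavaheuristictake2} and the Diophantine conjectures the argument is essentially bookkeeping; its substantive content is the mass conspiracy $(p-1)m(S)=O(1)$ --- which survives the large mass $\approx 3p$ at $p$ --- together with $q_2\asymp 1/p$, and the observation that a $p$-independent quantity bounded by $C/p$ for all large $p$ must vanish. The step I expect to require the most care is this last passage from $U^{(2,p)}$ back to $U$: one must keep the order of limits straight (take $X\to\infty$ first, for each fixed $p$) and invoke the two density hypotheses in the correct direction.
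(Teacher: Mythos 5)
Your proposal is correct and follows essentially the same route as the paper: bound the elliptic-curve count by $\sum G'(S)$, feed that into Heuristic~\ref{GL2bhargavaheuristictake2}, observe that $(p-1)\,m(S)$ is $O(1)$ while the Hasse-bound probability at $2$ is $\asymp 1/p$, and then let $p\to\infty$ for the ``in particular.'' The only differences are cosmetic --- you make explicit the appeals to Mazur, Serre's uniformity, and Frey-Mazur (which the paper's proof leaves implicit, having introduced them in \S\ref{diophantineconjectures}) and you spell out the three-factor decomposition $e/u=(e/e')(e'/u')(u'/u)$ for the final step, while the paper instead computes the Hasse-bound probability exactly as $\tfrac{5}{p}+O(p^{-2})$ rather than simply asserting $\asymp 1/p$.
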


\begin{proof}
By direct calculation, the number of elements of trace 0 in $\GL_2(\FF_p)$ equals $p^2(p-1).$  The number of elements of trace $a$ for every $a \in \FF_p^\times$ equals 
$$\frac{\# \GL_2(\FF_p) - \# \text{trace 0 elments}}{p-1} = p(p^2 - p - 1).$$
Thus, the likelihood a random conjugacy class satisfies the Hasse bound at 2 equals
$$\frac{4 \cdot  p(p^2 - p - 1) + 1 \cdot p^2(p-1)}{\# \GL_2(\FF_p)} = \frac{5}{p} + O(p^{-2}).$$

Applying Heuristic \ref{GL2bhargavaheuristictake2},
\begin{align} \label{bhargavaheuristictake2}
%&\frac{1}{2} \cdot \frac{p-1}{p} \cdot \# \{ \text{elliptic curves with squarefree conductor } \in U \cap [1, X] \} \nonumber \\
&\# \{ \text{elliptic curves with squarefree conductor } \in U^{(2,p)} \cap [1,X]  \} \nonumber \\
&\leq \sum_{S: N(S) \in U^{(2,p)} \cap [1,X] } G'(S) \nonumber \\
&\sim \sum_{S: N(S) \in U^{(2,p)} \cap [1,X]} (p-1) \cdot m(S) \cdot \text{likelihood of satisfying the Hasse bound at } 2 \nonumber \\
&= \frac{1}{p-1} \cdot \left( 3p-1 + \frac{2}{p-1} \right)  \cdot \left( \frac{5}{p} + O(p^{-2}) \right)  \cdot \# U^{(2,p)} \cap [1,X]. 
\end{align}

It follows that for every fixed $p,$  
$$\limsup_{X \to \infty} \frac{\# \{ \text{elliptic curves with squarefree conductor } \in U^{(2,p)} \cap [1,X] \} }{\# U^{(2,p)} \cap [1,X]} \leq \frac{C}{p}$$
for some absolute constant $C.$  If we assume that $U^{(2,p)}$ has positive density in $U$ and that the set of elliptic curves of conductor in $U^{(2,p)}$ has positive density in the set of elliptic curves of density $p,$ then letting $p \to \infty$ gives  
$$\lim_{X \to \infty} \frac{ \# \{ \text{elliptic curves with squarefree conductor } \in U \cap [1,X] \}}{\# U \cap [1,X]} = 0.$$ 
\end{proof} 

\emph{Assuming Heuristic \ref{GL2bhargavaheuristictake2} for $U = $ all squarefree integers}, the above calculation resolves Problems $(N)$ and $(\Delta).$   However, we believe that when $U = $ all squarefree integers, Heuristic \ref{GL2bhargavaheuristictake2} may undercount the number of $\GL_2(\FF_p)$-extensions satisfying the expected local properties.  This will be discussed further in \S \ref{atkinlehner}.

\section{The effect of Atkin-Lehner involutions on the number of $\GL_2(\FF_p)$-extensions with good local properties} \label{atkinlehner}

We propose a second model for counting conjugacy classes of surjective homomorphisms $r: G_\Q \twoheadrightarrow \GL_2(\FF_p)$ which are locally modular.    

Let $N$ denote the Artin conductor of $r$; suppose that $p \nmid N.$  Let $S_2(N, \FF_p)_{\mathrm{new}}$ denote the space of weight 2 cusp forms with $\FF_p$-coefficients. Let $\mathbb{T}$ denote the $\FF_p$-subalgebra of $\End(S_2(N, \FF_p)_{\mathrm{new}})$ generated by the prime to $N$ Hecke operators and let $\mathbb{T}_{\mathrm{red}}$ denote its reduced quotient.  By Serre's Conjecture - now proven by Khare-Winterberger \cite{KW} - there is a bijection 

\begin{align} \label{serre}
&\text{Locally modular homomorphisms } r: G_\Q \rightarrow \GL_2(\FF_p) \nonumber\\
\leftrightarrow &\FF_p\text{-algebra homomorphisms } \phi: \mathbb{T} \rightarrow \FF_p. \nonumber\\
= &\FF_p\text{-algebra homomorphisms } \phi: \mathbb{T}_{\mathrm{red}} \rightarrow \FF_p
\end{align} 

characterized by the compatibility that if $r$ is associated with $\phi,$ then for all primes $v \nmid Np,$ the characteristic polynomial of $r(\frob_v) = x^2 - \phi(T_v) x + v.$  

\begin{rem}
The conditions defining ``locally modular" were concocted specifically to match the local properties of the representations $r_{\phi}: G_\Q \rightarrow \GL_2(\FF_p)$ associated with homomorphisms $\phi: \mathbb{T} \rightarrow \FF_p.$
\end{rem}

\subsection{Atkin-Lehner involutions}
Suppose $N = p_1 \cdots p_s$ for distinct primes $p_1,\ldots, p_s.$  For every $i = 1,\cdots, s,$ there is an associated Atkin-Lehner involution $w_i,$ which we briefly describe.  

The curve $X_0(N)$ parametrizes pairs $(E,C)$ where $E$ is an (generalized) elliptic curve and $C \subset E[N]$ is cyclic of order $N.$  There is a canonical decomposition $C = C_1 \times \cdots \times C_s,$ where $C_i \subset E[p_i]$ is cyclic of order $p_i.$  Then

$$w_i(E,C) := (E, C_1 \times \cdots \times C_i^\perp \times \cdots \times C_s),$$

where $C_i^\perp$ is the orthogonal complement of $C_i$ in $E[p_i]$ with respect to the $p_i$-Weil pairing.  All $w_i$ commute with each other, commute with $\mathbb{T},$ and satisfy $w_i^2 = 1.$  Let $W$ denote the finite group of automorphisms of $S_2(N,\FF_p)_{\mathrm{new}},$ isomorphic to $\left(\mathbb{Z}/2 \right)^s,$ generated by the Atkin-Lehner operators.  Let $\widehat{W}$ denote its character group.  There is an isotypic decomposition  

$$S_2(N,\FF_p)_{\mathrm{new}} = \bigoplus_{\chi \in \widehat{W}} S_2(N,\FF_p)_{\mathrm{new},\chi}$$ 

and corresponding decompositions

$$\mathbb{T} = \prod_{\chi \in \widehat{W}} \mathbb{T}_{\chi} \text{ and } \mathbb{T}_{\mathrm{red}} = \prod_{\chi \in \widehat{W}} \mathbb{T}_{\mathrm{red}, \chi}.$$

Let $d = \dim_{\FF_p} S_2(N,\FF_p)_{\mathrm{new}} = \dim_{\FF_p} \mathbb{T},$ let $d_{\chi} = \dim_{\FF_p} S_2(N,\FF_p)_{\mathrm{new},\chi} = \dim_{\FF_p} \mathbb{T}_{\chi},$ and let $e_{\chi} = \dim_{\FF_p} \mathbb{T}_{\mathrm{red}, \chi}.$  We \emph{assume the dimensions $e_{\chi}$ are all large, say $\gg N^{\theta}$ for some $\theta > 0.$}

\begin{rem} 
Applying the trace formula to compute the trace of products $w_{i_1} \cdots w_{i_k}$ acting on $S_2(N, \Q)_{\mathrm{new}},$ Martin \cite{Martin} shows that $d_{\chi, \Q} := \dim_\Q S_2(N,\Q)_{\mathrm{new},\chi}$ all roughly equal $d / 2^s.$  This does not directly imply the above assumption, since $d_{\chi,\Q}$ does not generally equal $d_{\chi}.$  For one, the mod $p$ reduction of the new subspace of $S_2(N,\Q)$ might not equal the new subspace of $S_2(N,\FF_p)$ because of mod $p$ congruences.  Also, it is not always true that $e_\chi = d_\chi.$  However, Martin's result strongly suggests that the above largeness hypothesis is reasonable. 
\end{rem}

\subsection{Modelling the $\mathbb{T}_{\mathrm{red},\chi}$ using random permutations} \label{heckealgebrarandompermutation}
The $\FF_p$-algebra $\mathbb{T}_{\mathrm{red}, \chi}$ is finite \'{e}tale.  Isomorphism classes of finite \'{e}tale $\mathbb{F}_p$-algebras of degree $e$ are equivalent to isomorphism classes of $\Gamma = \mathrm{Gal}(\overline{\FF}_p / \FF_p)$-sets of size $e$ (with open stabilizers).  Representing isomorphism classes of $\Gamma$-sets of size $e$ as conjugacy classes of (continuous) homomorphisms $\Gamma \rightarrow S_e,$ it follows that isomorphism classes of $\Gamma$-sets are uniquely determined by the conjugacy class of $\frob_p$ in $S_e$; for a finite \'{e}tale algebra $E$ of degree $e,$ we let $\frob_p(E)$ denote its associated conjugacy class in $S_e.$ 

\begin{heuristic}[Random independent permutation model] \label{randompermutationmodel}
We model the collection of isomorphism classes $\mathbb{T}_{\mathrm{red},\chi}, \chi \in \widehat{W},$ as the finite \'{e}tale $\FF_p$-algebra associated with independently sampled random conjugacy classes $c_\chi \subset S_{e_\chi}, \chi \in \widehat{W}.$

``Random conjugacy class" means that we assign $c_{\chi}$ mass $\frac{\# c_{\chi}}{e_\chi !}$; this is the pushforward to conjugacy classes of the uniform probability measure on $S_{e_\chi}.$
\end{heuristic}

For every finite \'{e}tale $\FF_p$ algebra $E$ of degee $e,$ there is a bijection
\begin{equation*}
\FF_p\text{-algebra homomorphisms } (E \rightarrow \FF_p) \leftrightarrow \text{fixed points of } \frob_p(E). 
\end{equation*}
It is well-known that the number of fixed points $X$ of a uniformly sampled element of $S_n$ converges in distribution to $\mathrm{Poisson}(1),$ the Poisson distribution of mean 1 for which $\mathrm{Prob}(X = k) = e^{-1} \frac{1}{k!},$ rapidly as $n$ grows.  In distribution, our random model thus predicts

\begin{align} \label{permutationmodel}
&\# \{ r: G_\Q \rightarrow \GL_2(\FF_p) \text{ locally modular of conductor } N = p_1 \cdots p_s \} \nonumber \\
&= \sum_{\chi \in \widehat{W}} \# \{\text{Homomorphisms } \phi: \mathbb{T}_{\mathrm{red},\chi} \rightarrow \FF_p  \} \nonumber \\
&= \sum_{\chi \in \widehat{W}} \# \text{fixed points of  } \frob_p(\mathbb{T}_{\mathrm{red},\chi}) \nonumber \\
&\sim \sum_{\chi \in \widehat{W}} \text{independent } \mathrm{Poisson}(1) \hspace{0.5cm} \text{ by Heuristic \ref{randompermutationmodel} } \nonumber\\
&\sim \mathrm{Poisson}(2^s).
\end{align}

The approximation from the second last line above, that that number of fixed points of a random permutation nearly equals $\mathrm{Poisson}(1)$ in distribution, is very accurate if $e_{\chi}$ is large.  

Summing over all squarefree $N$ in the interval $[X,2X],$ \eqref{permutationmodel} suggests that

\begin{align} \label{resultofpermutationmodel}
&\sum_{N \text{ squarefree } \in [X,2X]} \# \{ r: G_\Q \rightarrow \GL_2(\FF_p) \text{ locally modular of conductor } N = p_1 \cdots p_s \} \nonumber \\
&\sim \sum_{N \text{ squarefree } \in [X,2X]} 2^{\omega(N)},
\end{align} 

where $\omega(N)$ denotes the number of distinct prime factors of $N.$\footnote{If we further assume, that the counts in \eqref{permutationmodel} are independent, then grouping the sum into terms with $\omega(N)$ constant, the central limit theorem implies that the passage from \eqref{permutationmodel}  to \eqref{resultofpermutationmodel} should have only square root error with very high probability.}  This leads us to make the following Conjecture based on \eqref{resultofpermutationmodel}:  

\begin{conj} \label{resultofnewmodelGL2fields}
Let 
$$G(X) :=  \sum_{N \text{ squarefree } \in [X,2X]} \# \{ r: G_\Q \rightarrow \GL_2(\FF_p) \text{ locally modular of conductor } N = p_1 \cdots p_s \}.$$

There are constants $C_1,C_2 > 0$ for which  

\begin{equation*}
C_1 X \log X \leq G(X) \leq C_2 X \log X.
\end{equation*}
\end{conj}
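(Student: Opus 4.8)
The plan is to establish Conjecture \ref{resultofnewmodelGL2fields} \emph{conditionally on the random independent permutation model}, Heuristic \ref{randompermutationmodel}. Under that model $G(X)$ becomes a random variable, and it suffices to (i) compute its expectation exactly, (ii) estimate the resulting arithmetic sum, and (iii) bound its variance well enough to pass from the mean to the two-sided bound in the statement. For step (i): if $\sigma \in S_n$ is uniform then $\mathbb{E}[\#\mathrm{Fix}(\sigma)] = n \cdot \tfrac{(n-1)!}{n!} = 1$ for \emph{every} $n \geq 1$, exactly. Combining this with the correspondence of \S\ref{heckealgebrarandompermutation} between $\FF_p$-algebra homomorphisms out of a finite \'etale $\FF_p$-algebra and fixed points of its Frobenius conjugacy class, and with \eqref{permutationmodel}, we get that for each squarefree $N$ with $\omega(N) = s$ the expected number of locally modular $r$ of conductor $N$ equals $\sum_{\chi \in \widehat{W}} 1 = 2^{s}$. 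Linearity of expectation then gives $\mathbb{E}[G(X)] = \sum_{N \text{ squarefree} \in [X,2X]} 2^{\omega(N)}$, which is precisely the heuristic equality \eqref{resultofpermutationmodel}, now on the nose.

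For step (ii), I would use the standard multiplicative estimate for $\sum \mu^2(N) 2^{\omega(N)}$. Its Dirichlet series $\prod_p (1 + 2 p^{-s})$ factors as $\zeta(s)^2$ times a Dirichlet series absolutely convergent and nonvanishing on $\Re s > \tfrac12$; since $\zeta(s)^2$ has a double pole at $s = 1$, a Perron/Tauberian argument gives $\sum_{N \leq Y} \mu^2(N) 2^{\omega(N)} \sim c\, Y \log Y$ for an explicit $c > 0$. Differencing at $Y = 2X$ and $Y = X$ yields $\sum_{N \text{ squarefree} \in [X,2X]} 2^{\omega(N)} \asymp X \log X$, so $\mathbb{E}[G(X)] \asymp X \log X$. (For $U$ the set of squarefree integers with a fixed number $s$ of prime factors one would instead invoke the Landau--Sathe--Selberg count of such $N$ to recover the sharper $2^s$-per-conductor statements of Conjectures \ref{randomlargepolynomial} and \ref{hasserandomlargepolynomial}.)

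For step (iii), I would bound the variance. For uniform $\sigma \in S_n$ one computes $\mathbb{E}[\#\mathrm{Fix}(\sigma)(\#\mathrm{Fix}(\sigma) - 1)] = 1$, hence $\mathrm{Var}(\#\mathrm{Fix}(\sigma)) = 1$ for $n \geq 2$. By the independence of the conjugacy classes $c_\chi$ in Heuristic \ref{randompermutationmodel} (and, across distinct $N$, the stronger independence flagged in the footnote to \eqref{resultofpermutationmodel}), the per-$\chi$ and per-$N$ contributions to $G(X)$ are uncorrelated, so $\mathrm{Var}(G(X)) \ll \sum_{N \text{ squarefree} \in [X,2X]} 2^{\omega(N)} \ll X \log X = o\big((\mathbb{E}[G(X)])^2\big)$. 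Chebyshev's inequality then forces $G(X) / \mathbb{E}[G(X)] \to 1$ in probability, which is exactly the assertion $C_1 X \log X \leq G(X) \leq C_2 X \log X$.

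The main obstacle is that all of this is conditional on the random permutation model, and that model is the genuinely unproven ingredient: there is no known mechanism showing that the reduced Hecke algebras $\mathbb{T}_{\mathrm{red},\chi}$ --- equivalently, the $\mathrm{Gal}(\overline{\FF}_p/\FF_p)$-sets $\mathrm{Spec}\,\mathbb{T}_{\mathrm{red},\chi}(\overline{\FF}_p)$ --- behave like uniformly random \'etale $\FF_p$-algebras of degree $e_\chi$, nor even that the weaker sum $\sum_N \#\{\text{locally modular } r \text{ of conductor } N\}$ has order $X \log X$; the latter would itself require an effective form of Bhargava's heuristics together with control of mod $p$ congruences among newforms. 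A secondary gap is the standing largeness hypothesis $e_\chi \gg N^\theta$: the trace-formula computation of Martin \cite{Martin} controls only the characteristic-zero dimensions $d_{\chi,\Q}$, and deducing largeness of $e_\chi$ requires understanding both the failure of $e_\chi = d_\chi$ and the discrepancy between the mod $p$ reduction of $S_2(N,\Q)_{\mathrm{new}}$ and $S_2(N,\FF_p)_{\mathrm{new}}$.
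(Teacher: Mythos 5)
Your proposal takes essentially the same route as the paper: derive $\mathbb{E}[G(X)] = \sum_{N \text{ squarefree} \in [X,2X]} 2^{\omega(N)}$ from the random permutation model via \eqref{permutationmodel}--\eqref{resultofpermutationmodel}, and then estimate that sum as $\asymp X\log X$ via the Dirichlet series $\prod_p(1+2p^{-s}) = \zeta(s)^2 \cdot (\text{abs. conv. on } \Re s > \tfrac12)$ as in \S\ref{tauberiantheorem}. Your explicit variance computation ($\mathrm{Var}(\#\mathrm{Fix}) = 1$ for $n\geq 2$) plus Chebyshev is a slightly more self-contained version of the concentration step the paper only gestures at in its footnote appeal to the central limit theorem; otherwise the two arguments coincide, including the closing caveats about the unproven model and the $e_\chi \gg N^\theta$ hypothesis.
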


Conjecture \ref{resultofnewmodelGL2fields} would follow from \eqref{resultofpermutationmodel} because $\sum_{N \text{ squarefree } \in [X,2X]} 2^{\omega(N)}$ has order of magnitude $X \log X$; see \S \ref{tauberiantheorem} for details.

We emphasize that \textbf{Conjecture \ref{resultofnewmodelGL2fields} lies at ends with the local-to-global Heuristic \ref{GL2bhargavaheuristictake1} as applied to the set $U$ of all squarefree integers}.  The next section describes some data pertaining to $\GL_2(\FF_p)$-number fields which suggests that the permutation model posited in this section may hold water.

\section{Some data for $\GL_2(\FF_{101})$-number fields} \label{data}
\subsection{Setup} \label{setup}
For an integer $N,$ let $m_{\mathrm{new}}(N)$ denote the number of \emph{multiplicity one} linear factors of the characteristic polynomial of $T_2$ acting on $S_2(N; \FF_{101})$ which do not divide the characteristic polynomial of $T_2$ acting on $S_2(M, \FF_{101})$ for any proper divisor $M | N.$  

%For all odd squarefree integers $100 \leq N \leq 5000$ which are products of at most three prime factors, we calculated $m_{\mathrm{new}}(N)$ in SAGE.  There are $644$ such $N$ which are prime, $964$ such $N$ with exactly two prime factors, and $354$ such $N$ with exactly three prime factors. \medskip

For all odd squarefree integers $2500 \leq N \leq 5000$ which are products of at most three prime factors, we calculated $m_{\mathrm{new}}(N)$ in SAGE.  There are $302$ such $N$ which are prime, $489$ such $N$ with exactly two prime factors, and $203$ such $N$ with exactly three prime factors. \medskip

The integer $m_{\mathrm{new}}(N)$ is always \emph{lower bound} for the number $r(N)$ of distinct conjugacy classes of locally modular homomorphisms $r: G_\Q \rightarrow \GL_2(\FF_{101})$ of conductor $N.$  It could be smaller for a couple reasons:
\begin{itemize}
\item
There could be two distinct Galois representations $r,r': G_\Q \rightarrow \GL_2(\FF_{101})$ of conductor $N$ for which $\tr(r(\frob_2)) = \tr(r'(\frob_2)).$  Then $r,r'$ together would contribute 0 to $m_{\mathrm{new}}(N)$ but contribute 2 to $r(N).$  This possibility is quite unlikely because 101 is large.

\item
A representation $r: G_\Q \rightarrow \GL_2(\FF_{101})$ of conductor $N$ could satisfy $\tr(r(\frob_2)) = \tr(r'(\frob_2))$ for some representation $r': G_\Q \rightarrow \GL_2(\FF_{101})$ of conductor $N'$ properly dividing $N.$  Then $r$ would contribute 0 to $m_{\mathrm{new}}(N)$ but contribute 1 to $r(N).$  This possibility is again quite unlikely because 101 is quite large and because $S_2(N',\FF_{101})_{\mathrm{new}}$ has (sometimes much) smaller dimension than $S_2(N,\FF_{101})_{\mathrm{new}}.$
\end{itemize}

To rigorously compute $r(N)$ via modular forms requires working with Hecke operators themselves (i.e. understanding their full action on modular symbols, or $q$-expansions, or \ldots) and finding a simultaneous Jordan decomposition.  This is far more computationally intensive than computing the characteristic polynomial of a single low degree Hecke operator.  However, $m_{\mathrm{new}}(N)$ appears to be a reasonable proxy for $r(N).$   

\begin{figure}
  \includegraphics[width=\linewidth]{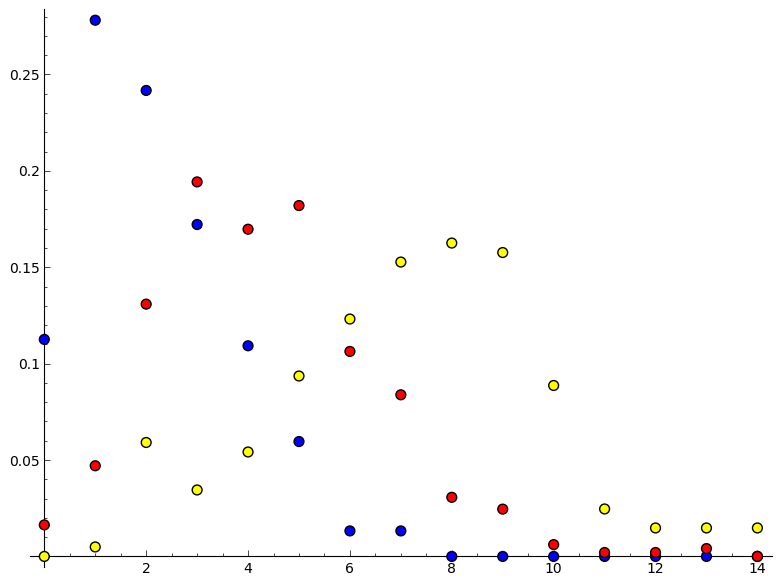}
  \caption{The distribution of $m_{\mathrm{new}}(N), 2500 \leq N \leq 5000,$ for $\omega(N) = s$ for $s = 1,2,3$ plotted above in blue, red, and yellow respectively.}
  \label{simplelinearfactorslevelatmostthreeprimefactors}
\end{figure}

\subsection{Explanation of Figure \ref{simplelinearfactorslevelatmostthreeprimefactors}} 
For fixed $s,$ the point $(x,y)$ appears in Figure \ref{simplelinearfactorslevelatmostthreeprimefactors} if 

%\begin{equation} \label{meaningofdotssimplelinearfactorslevelatmostthreeprimefactors}
%y = \frac{\# \{ 100 \leq N \leq 5000: N = p_1 \cdots p_s \text{ for } p_1,\ldots,p_s \text{ distinct odd primes}, m_{\mathrm{new}}(N) = x \} }{\# \{ 100 \leq N \leq 5000 : N = p_1 \cdots p_s \text{ for } p_1, \ldots, p_s  \text{ distinct odd primes} \} }.
%\end{equation}

\begin{equation} \label{meaningofdotssimplelinearfactorslevelatmostthreeprimefactors}
y = \frac{\# \{ 2500 \leq N \leq 5000: N = p_1 \cdots p_s \text{ for } p_1,\ldots,p_s \text{ distinct odd primes}, m_{\mathrm{new}}(N) = x \} }{\# \{ 2500 \leq N \leq 5000 : N = p_1 \cdots p_s \text{ for } p_1, \ldots, p_s  \text{ distinct odd primes} \} }.
\end{equation}

Blue dots correspond to $s = 1,$ red dots correspond to $s = 2,$ and yellow dots correspond to $s = 3.$  Per our summary in \S \ref{setup}, the denominator in \eqref{meaningofdotssimplelinearfactorslevelatmostthreeprimefactors} 
%equals 644, 964, and 354 respectively for $s = 1,2,3.$
equals 302, 489, and 203 respectively for $s = 1,2,3.$

Qualitatively, the data appears to be centered from left to right in the order blue, red, yellow.  This is quantified by the following summary statistics: \bigskip

%\begin{tabular}{l | c | c | c} 
%$s$   & color & mean of $m_{\mathrm{new}}(N)$ for $\omega(N) = s$ & variance of $m_{\mathrm{new}}$ for $\omega(N) = s$ \\
%\hline
%1 & blue & 2.371118 \ldots & 2.48731048\ldots \\
%\hline
%2 & red & 4.26452282\ldots & 4.432065\ldots \\
%\hline
%3 & yellow & 6.816384\ldots & 7.29114\ldots \\
%\end{tabular}
%\bigskip

\begin{tabular}{l | c | c | c} 
$s$   & color & mean of $m_{\mathrm{new}}(N)$ for $\omega(N) = s$ & variance of $m_{\mathrm{new}}$ for $\omega(N) = s$ \\
\hline
1 & blue & 2.18543 \ldots & 2.382834\ldots \\
\hline
2 & red & 4.33333 \ldots & 4.54124\ldots \\
\hline
3 & yellow & 7.1724\ldots & 6.773229\ldots \\
\end{tabular}
\bigskip

The above means for $m_{\mathrm{new}}(N),$ for $\omega(N) = s$ fixed, are ``in the same ballpark" as the respective means of $2^1, 2^2,$ and $2^3$ that we'd expect for $s = 1,2,$ and $3$ from the random permutation Heuristic \ref{randompermutationmodel}.

By contrast, if the statistics of locally modular representations $r: G_\Q \rightarrow \GL_2(\FF_{101})$ were governed by Heuristic \ref{GL2bhargavaheuristictake1}, for $U = $ all squarefree integers in the notation of \S \ref{take1}, we would expect the mean to be bounded \emph{independent of} $s.$

\subsection{Numerical evidence that the number of $\FF_p$-valued Hecke eigenvalue systems occuring in $S_2(N,\FF_{101})_{\mathrm{new},\chi}$ for $\chi \in \widehat{W}$ has distribution $\mathrm{Poisson}(1)$}

Suppose $N = p_1 \cdots p_s$ for distinct primes $p_1,\cdots,p_s.$  Heuristic \ref{randompermutationmodel} predicts that the number of $\FF_p$-valued Hecke eigenvalue systems occuring in every simultaneous Atkin-Lehner eigenspace $S_2(N, \FF)_{\mathrm{new},\chi}, \chi \in \widehat{W},$ should have distribution $\mathrm{Poisson}(1)$ and furthermore that these counts should behave independently; see \S \ref{heckealgebrarandompermutation} for further details.  Since the sum of $2^s$ independent $\mathrm{Poisson}(1)$ random variables has distribution $\mathrm{Poisson}(2^s),$  Heuristic \ref{randompermutationmodel} suggests that $m_{\mathrm{new}}(N)$ should have distribution $\mathrm{Poisson}(2^s).$

\begin{figure}
  \includegraphics[width=\linewidth]{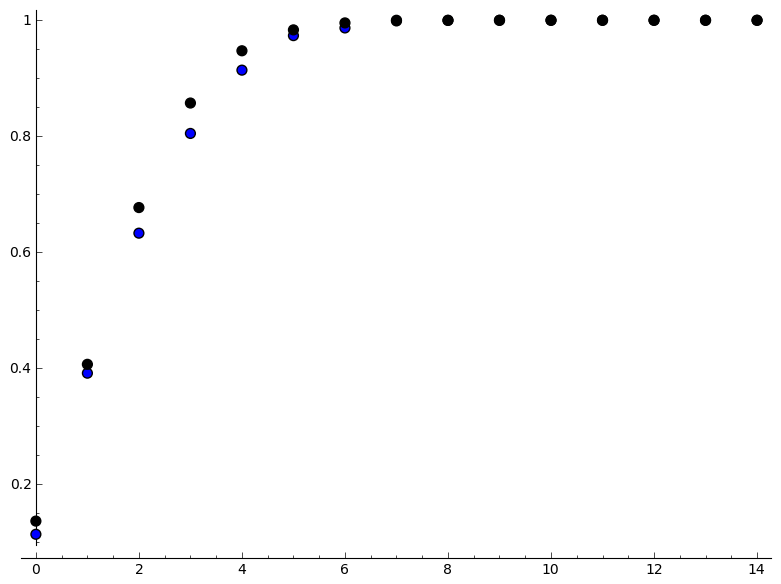}
  \caption{Comparing the CDF of the blue dot distribution of $m_{\mathrm{new}}(N), 2500 \leq N \leq 5000,$ for $N$ prime from Figure \ref{simplelinearfactorslevelatmostthreeprimefactors}, pictured in blue above, to the CDF of a $\mathrm{Poisson}(2)$ random variable, pictured in black above.}
  \label{poisson2}
\end{figure}

\begin{figure}
  \includegraphics[width=\linewidth]{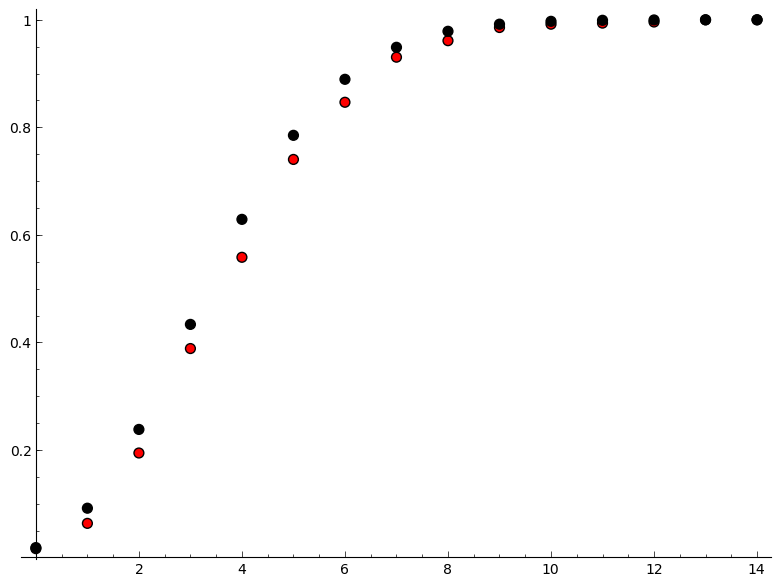}
  \caption{Comparing the CDF of the red dot distribution of $m_{\mathrm{new}}(N), 2500 \leq N \leq 5000,$ for $\omega(N) = 2$ from Figure \ref{simplelinearfactorslevelatmostthreeprimefactors}, pictured in red above, to the CDF of a $\mathrm{Poisson}(4)$ random variable, pictured in black above.}
  \label{poisson4}
\end{figure}

\begin{figure}
  \includegraphics[width=\linewidth]{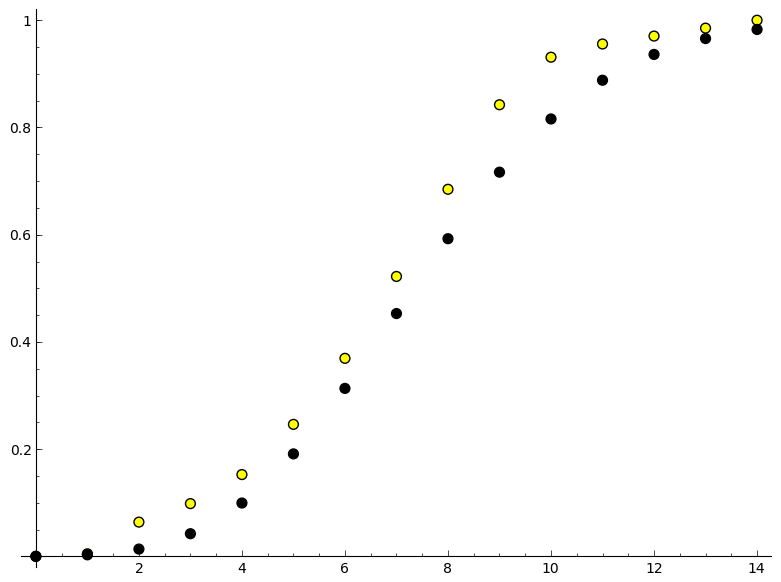}
  \caption{Comparing the CDF of the yellow dot distribution of $m_{\mathrm{new}}(N), 2500 \leq N \leq 5000,$ for $\omega(N) = 3$ from Figure \ref{simplelinearfactorslevelatmostthreeprimefactors}, pictured in yellow above, to the CDF of a $\mathrm{Poisson}(8)$ random variable, pictured in black above.}
  \label{poisson8}
\end{figure}

In Figures \ref{poisson2}, \ref{poisson4}, and \ref{poisson8}, we have overlaid the cumulative distribution function (CDF) of $m_{\mathrm{new}}(N)$ when $\omega(N) = s = 1,2,3$ in blue, red, and yellow respectively, and the CDF of $\mathrm{Poisson}(2^s)$ in black.  In all three cases, the discrepancy\footnote{The \emph{discrepancy} between two probability measures $\mu, \nu$ on $\R$ is defined to be 
$$\sup_x |\mathrm{CDF}(\mu)(x) - \mathrm{CDF}(\nu)(x)|$$} between the distribution of $m_{\mathrm{new}}(N)$ and $\mathrm{Poisson}(2^{\omega(N)})$ is quite small: \medskip

\begin{tabular}{l | c | c} 
$s$   & color & discrepancy between $m_{\mathrm{new}}(N), \omega(N) = s,$ and $\mathrm{Poisson}(2^s)$  \\
\hline
1 & blue & 0.05248 \ldots\\
\hline
2 & red & 0.07055 \ldots \\
\hline
3 & yellow & 0.12574 \ldots \\
\end{tabular}
\bigskip

\subsection{Reconciling the local-to-global Heuristics \ref{GL2bhargavaheuristictake1} and \ref{GL2bhargavaheuristictake2} with the random permutation Heuristic \ref{randompermutationmodel}}
 
Based on the limited data presented in this paper, it appears that the local-to-global Heuristics \ref{GL2bhargavaheuristictake1} and \ref{GL2bhargavaheuristictake2} might undercount $r(N),$ the number of conjugacy classes of homomorphisms $r: G_\Q \rightarrow \GL_2(\FF_p)$ of squarefree conductor $N$ satsfying local properties (a$'$),(b$'$),(c$'$),(d$'$), when $N$ has \emph{many prime factors}.  We speculate that if $U$ is a subset of the squarefree integers for which $\omega(N), N \in U,$ is bounded, then

\begin{equation} \label{boundednumberofGL2extensionsonaverage}
\frac{\sum_{N \in U \cap [1,X]} r(N)}{\# U \cap [1,X]} \text{ is bounded;}
\end{equation}

boundedness \eqref{boundednumberofGL2extensionsonaverage} is consistent with both the local-to-global Heuristic \ref{GL2bhargavaheuristictake1} and the random permutation Heuristic \ref{randompermutationmodel}.

Furthermore, we believe that the philosophy underlying the local-to-global heuristic, namely that the local components of $r:G_\Q \rightarrow \GL_2(\FF_p)$ should be equidistributed, is true and robust.  We close by describing one corroborating piece of data.

  \begin{figure} 
  \includegraphics[width=\linewidth]{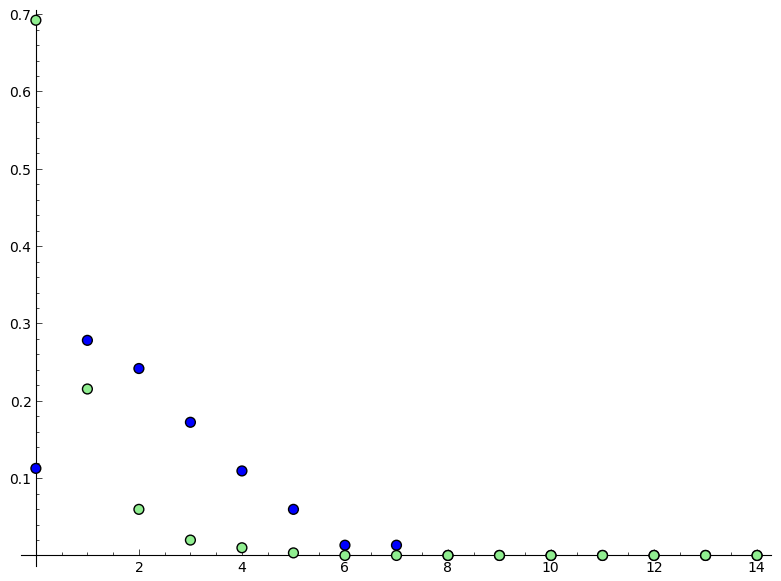}
  \caption{Comparing the distribution of $m_{\mathrm{new}}(N),$ pictured in blue, to the distribution of $m_{\mathrm{Hasse}}(N),$ pictured in light green, for prime $N$ in the interval $[2500,5000].$  Note the light green spike over $m_{\mathrm{Hasse}}(N) = 0.$}
  \label{hassebound}
\end{figure}

Let $m_{\mathrm{Hasse}}(N)$ denote the number of linear factors of the characteristic polynomial of $T_2$ acting on $S_2(N,\mathbb{F}_{101})$ \text{ satisfying the Hasse bound at 2}, i.e. whose roots are the reduction of an integer of absolute value at most $\lfloor 2 \sqrt{2} \rfloor = 2.$   As mentioned before, $m_{\mathrm{new}}(N)$ is a lower bound for $r(N),$ the number of conjugacy classes of representations $r: G_\Q \rightarrow \GL_2(\FF_{101})$ satisfying the local properties (a$'$),(b$'$),(c$'$),(d$'$).   Also, $m_{\mathrm{Hasse}}$ is an upper bound for the number of conjugacy classes of representations $r$ as above \emph{which also satisfy the Hasse bound at 2}; let $r_{\mathrm{Hasse}}(N)$ denote the number of such $r.$

Figure \ref{hassebound} plots the distribution of $m_{\mathrm{new}}(N),$ in blue, and the distribution of $m_{\mathrm{Hasse}}(N),$ in light green, for prime $N$ between 100 and 5000.  It is clear from the picture that the distribution of $m_{\mathrm{Hasse}}$ is far to the left of the distribution of $m_{\mathrm{new}}.$  Some summary statistics from this sample of 302 different primes $N$: \medskip

\begin{tabular}{l | c | c | c } 
color & random variable & mean  & variance  \\
\hline
blue & $m_{\mathrm{new}}$ & 2.18543 \ldots & 2.382834\ldots \\
\hline
light green & $m_{\mathrm{Hasse}}$ & 0.45033 \ldots & 0.67137 \ldots \\
\end{tabular}
\bigskip

Also note that for the sampled prime $N, m_{\mathrm{Hasse}}$ equals zero nearly $70\%$ of the time and is at most 1 more than $90\%$ of the time.  Because $r_{\mathrm{Hasse}}(N) \leq m_{\mathrm{Hasse}}(N)$ and $m_{\mathrm{new}}(N) \leq r(N),$ the distinction between $r_{\mathrm{Hasse}}(N)$ and $r(N)$ is even more pronounced than the distinction between the blue and light green dots in Figure \ref{hassebound}.  

\begin{appendix}
\section{Galois cohomology and the mass at $p$} \label{galoiscohomologyandmassp}
\subsection{Galois cohomology}
To facilitate our computation in \S \ref{massp} of the mass at $p$ for ordinary representations, we need to calculate $\# H^1(G_p, \chi \cdot e^2).$  The Cartier dual of $\chi \cdot e^2$ equals $\left( \chi^{-1} \cdot e^{-2} \right) \cdot \chi = e^{-2}.$  So by local duality for Galois cohomology,
$$H^2(G_p, \chi \cdot e^2) = H^0(G_p, e^{-2}).$$
\begin{itemize}
\item[(1)]
Suppose $e^2 = 1.$  

Then $\# H^0(G_p, \chi \cdot e^2) = 1, \# H^2(G_p,\chi \cdot e^2) = \# H^2(G_p, e^{-2}) = p.$ By the local Euler characteristic formula, 
$$\# H^1(G_p, \chi \cdot e^2) = p^2.$$

\item[(2)]
Suppose $e^2 \neq 1.$  

Then $\# H^0(G_p, \chi \cdot e^2) = 1$ because $\chi \cdot e^2 \neq 1$ (since $\chi$ has order $p-1$ and so is non-square) and $\# H^2(G_p, \chi \cdot e^2) = \# H^0(G_0, e^{-2}) = 1.$  By the local Euler characteristic formula,
$$\# H^1(G_p, \chi \cdot e^2) = p.$$
\end{itemize}

\subsection{The mass at $p$}
\label{massp}
\begin{itemize}
\item[(1$'$)]
Suppose $e^2 \neq 1.$  Then $\# \mathrm{Ext}^1(e^{-1}, \chi \cdot e) = p$ and all elements correspond to distinct $\GL_2(\FF_p)$-conjugacy classes of representations $r: G_p \rightarrow \GL_2(\FF_p).$ 

The centralizer of a non-trivial extension equals the center of $\GL_2(\FF_p).$  Indeed, there is some $g_0 \in G_p$ for which $e^{-1}(g_0) \neq \chi \cdot e(g_0)$ because $\chi$ is ramified and $e$ is unramified.  Since $r(g_0)$ has distinct eigenvalues in $\mathbb{F}_p^\times,$ we choose an eigenbasis $e_1,e_2.$  The centralizer of $r$ must commute with $r(g_0)$ and so must be diagonal with respect to $e_1,e_2.$  Equating the top right entries of the equality 
$$\left( \begin{array}{cc} \chi \cdot e & c \\ 0 & e^{-1} \end{array} \right) \left( \begin{array}{cc} x & 0 \\ 0 & y \end{array} \right) = \left( \begin{array}{cc} x & 0 \\ 0 & y \end{array} \right) \left( \begin{array}{cc} \chi \cdot e & c \\ 0 & e^{-1} \end{array} \right)$$
gives $yc = xc.$  Since $c$ is not the zero cocycle (it's not even a coboundary), we must have $x = y.$

There are $p-3$ characters $e$ for which $e^2 \neq 1.$  For every one of these characters, and every non-trivial element of $\mathrm{Ext}^1(e^{-1}, \chi \cdot e),$ there is a unique associated $\GL_2(\FF_p)$-conjugacy class of representations with mass $\frac{1}{p-1}.$  Such characters $e$ and $\mathrm{Ext}$ elements contribute 
$$\frac{1}{p-1}\cdot (p-3)(p-1)$$
to the mass at $p.$  For every trivial $\mathrm{Ext}$ class, i.e. for every split representation, the centralizer is a torus of order $\frac{1}{(p-1)^2}.$  Such trivial extensions contribute
$$\frac{1}{(p-1)^2} \cdot (p-3)$$
to the mass at $p.$

\item[(2$'$)]
Suppose $e^2 = 1.$  Then $\# \mathrm{Ext}^1(e^{-1}, \chi \cdot e) = p^2$ and all elements correspond to distinct $\GL_2(\FF_p)$ conjugacy classes of representations $r: G_p \rightarrow \GL_2(\FF_p).$  For the same reason as above, if $r$ is a non-trivial extension, i.e. if it corresponds to a non-trivial element of $\mathrm{Ext}^1(e^{-1}, \chi \cdot e),$ the centralizer of $r$ equals the center of $\GL_2(\FF_p).$  Therefore, the total mass at $p$ corresponding to non-trivial $\mathrm{Ext}$ classes equals
$$\frac{1}{(p-1)} \cdot 2 \cdot (p^2 - 1).$$
The centralizer of both split extensions is a torus of order $\frac{1}{(p-1)^2}.$  These two split extensions therefore contribute
$$\frac{1}{(p-1)^2} \cdot 2$$
to the mass at $p.$
\end{itemize}

Summing the contribution from (1$'$) and (2$'$), the total mass at $p$ equals
\begin{align*}
& \frac{1}{(p-1)} \cdot (p - 3)(p-1) +\frac{1}{(p-1)^2} \cdot (p-3) + \frac{1}{(p-1)} \cdot 2(p^2 - 1) + \frac{1}{(p-1)^2} \cdot 2 \\
= & 3p - 1 + \frac{1}{p-1}.
\end{align*}

\section{The order of magnitude of $\sum_{N \text{ squarefree } \in [X,2X]} 2^{\omega(N)}$} \label{tauberiantheorem}
Let 

\begin{equation*}
L(s) = \prod_{p \text{ prime}} (1 + 2 p^{-s}) = \sum_{N \text{ squarefree}} 2^{\omega(N)} N^{-s}
\end{equation*}

Note that 

\begin{align*}
\frac{L(s)}{\zeta(s)^2} &=\prod_p (1 + 2 p^{-s}) (1 - 2p^{-s} + p^{-2s})   \\
&= \prod_p (1 - 3p^{-2s}  + 2p^{-3s}),
\end{align*}

which converges absolutely and uniformly on $\Re(s) \geq \frac{1}{2} + \delta.$  Therefore, $L(s)$ is meromorphic on $\Re(s) > \frac{1}{2},$ has a pole of order 2 at $s=1,$ and is uniformly bounded on the vertical strips $\frac{1}{2} + \delta \leq \Re(s) \leq 1 - \delta.$  Let $f$ be a smooth function of compact support on $(0,\infty).$  Taking the Mellin transform and its inverse of the smoothed sum
$$S_f(X) = \sum_{N \text{ squarefree}} 2^{\omega(N)} f \left(  \frac{N}{X} \right)$$
gives
$$S_f(X) = \frac{1}{2\pi i} \int_{\Re(s) = c} L(s) f^\vee(s) \cdot X^s \; ds,$$

where $f^\vee$ denotes the Mellin transform of $f.$

Let $L(s) = \sum a_n (s-1)^n, f^\vee(s) = \sum b_n (s-1)^n$ be the Laurent expansions of $L$ and $f^\vee$ at $s = 1.$  Then 
$$L(s) f^\vee(s) \cdot X^s \text{ has residue } a_{-1} b_0 c_0 +a_{-2} b_1 \cdot  X + a_{-2} b_0 \cdot  X \log X \text{ at } s = 1.$$ 
Shifting the contour from $\Re(s) = 2$ to $\Re(s) = \frac{1}{2} + \delta,$ justified because $f^\vee(s)$ decays rapidly for $\Im(s)$ large, gives

$$S_f(X) = a_{-2} b_1 \cdot X + a_{-2} b_0 \cdot X \log X + O_{f,\delta}( X^{1/2 + \delta}),$$

where the constant in the big $O$ depends only on $f,\delta.$  Note also that

$$b_0 = f^\vee(1) = \int_0^{\infty} f(X) X^1 \frac{dX}{X} = \int_0^{\infty} f(X) dX.$$

Letting $f$ become an increasingly good approximation to the characteristic function of $[1,2],$ we find 

$$\sum_{N \text{ squarefree } \in [X,2X] } 2^{\omega(N)} \sim a_{-2} \cdot X \log X.$$

\end{appendix}

\end{document}